\documentclass[a4paper, 11 pt, twoside]{amsart}
\usepackage{srollens-en, shortcuts-sr}
\usepackage{todonotes}
\usepackage[left = 3.5cm, right = 3.5cm, headsep = 6mm,
footskip = 10mm, top = 35mm, bottom = 35mm, footnotesep=5mm, headheight =
2cm]{geometry}

\newlist{prooflist}{description}{1}
\setlist[prooflist]{font=\normalfont \itshape, labelindent = \parindent, leftmargin = 0pt}

\usepackage{comment}

\usepackage[unicode,bookmarks, pdftex]{hyperref}
\hypersetup{colorlinks=true,citecolor=NavyBlue,linkcolor=NavyBlue,urlcolor=Orange, pdfpagemode=UseNone, breaklinks=true}

\newcommand{\Sp}{\mathsf{sp}}

\newcommand{\I}{\mathrm{I}}
\newcommand{\DI}{\mathrm{II}}
\newcommand{\III}{\mathrm{III}}

\title{Yet another family of complex structures on $S^6$}

\author{Wenfei Liu}
\address{School of Mathematical Sciences, Xiamen University, Siming South Road 422, 361005 Xiamen, Fujian Province, P.~R.~China}
\email{wliu@xmu.edu.cn}

\author{S\"onke Rollenske}
\address{FB 12/Mathematik und Informatik\\
Philipps-Universit\"at Marburg\\Hans-Meerwein-Str. 6\\
35032 Marburg\\Germany}

\email{rollenske@mathematik.uni-marburg.de}

\begin{document}
\begin{abstract}
Following a similar path as the recent construction of a complex structure on $S^6$ we construct another such  family, disjoint from the known ones. 
\end{abstract}
\subjclass[2020]{ 32J17;  32Q55, 14J27}

\keywords{Hopf problem, six sphere}

\maketitle
\setcounter{tocdepth}{1}
\tableofcontents

\section{Introduction}
The question of which spheres admit complex structures, that is, can be endowed with a holomorphic atlas to become compact complex manifolds, 
was raised by Hopf in \cite{Hopf1948}, where he pointed out that some manifolds, among them $S^4$ and $S^8$ cannot admit an almost complex structure.
After partial results by many authors, Borel and Serre proved in 
\cite{BorelSerre} that $S^2$ and $S^6$ are the only spheres admitting almost complex structures. Since $S^2$ can be identified with the complex projective line, the remaining 
\emph{ Hopf problem} was to prove or disprove the existence of a complex structure on $S^6$. Over the next decades there were several attempts to settle this, compare 
\cite{surveyS6} for the state of the art in 2017, and  \cite{zbMATH07167603,  zbMATH08259842} for recent contributions, but it  was still included in Yau's list of open problems in geometry \cite{Yau2024}.

Recently, contrary to the expectations of many\footnote{One obstacle was that, due to  a gap in the literature, the possibility of a complex structure of algebraic dimension $1$ on $S^6$, was known only to a few experts. We learned about this only after \cite{Alpoge2026} appeared, from J.\ Viaclovsky.}, a 1-parameter family of complex structures on $S^6$ was constructed by AI 
\cite{Alpoge2026}. Subsequently, Engel distilled the geometric essence of the construction in  \cite{Engel2026}.

The present paper grew from the attempt to understand that construction, by performing its analogue in a related setting. More due to luck than systematic explorations we found another family of complex structures on the sphere $S^6$.
\begin{custom}[Theorem \ref{thm: main}]
	There exists a  family of compact  complex manifolds $f\colon Y(u) \to \IP^1$ parametrised by  $u$ in a sufficiently small punctured disk such that the differentiable manifold underlying $Y(u)$ is diffeomorphic to $S^6$ and which is disjoint from the family constructed in \cite{Alpoge2026, Engel2026}.
\end{custom}
For the sake of simplicity, we decided not to explore what happens if we vary the discrete parameter chosen in Definition \ref{def: choice of a}. Having stumbled upon a second family of complex structures on $S^6$ and then being informed about an independent construction by Jeff Viaclovsky (see Remark \ref{rem: third family} below) we wonder if there are several more out there or if all constructions of this kind will turn out to belong to one connected family. 

Our proof follows the approach of Engel very closely, as do several other recent preprints \cite{GuoFangLu2026S2xS4,Yang2026CompactComplexThreefolds,Wang2026RationalHomologySixSphere, LiangLiuZhang2026ComplexStructures, Chen2026ConditionalOka, Viaclovsky2026TwoParameterS6}. Beyond the existence of another family, we hope that the present paper can be a useful companion to \cite{Engel2026}, because we organise the material more to our taste and thus emphasise different aspects.  In contrast  to some recent contributions we focus on human readability throughout.

\begin{rem}\label{rem: third family}
After the completion of this paper we were informed by Jeff Viaclovsky that he has constructed a 2-dimensional family of complex structures on $S^6$ from a rational elliptic surface with one fibre of type $\III^*$ and three fibres of type $\I_1$ \cite{Viaclovsky2026TwoParameterS6}. Since our surface is a degeneration of his surface, we strongly suspect that our family is a degeneration of his family. 

The families are still disjoint, by the same argument used in Theorem \ref{thm: main}.
\end{rem}

\subsection*{Acknowledgements} 
The second author had the pleasure to visit Yongnam Lee at IBS in Daejeon while working through \cite{Engel2026} and is grateful for the stimulating atmosphere and discussions. 
He would also like to thank Giovanni Bazzoni, Nicolina Istrati and Jeff Viaclovsky for  discussions on the topic and Philip Engel for some email exchange. 

Wenfei Liu was supported by the NSFC (no.~12571046). Sönke Rollenske was supported by the DFG.   We acknowledge the financial support for this collaboration provided by the Tianyuan Mathematical Center in Southeast China.

\subsection{AI workflow and use disclosure}\label{sect: AI use}
Since there is currently no established standard for what is considered good and ethical practice in the use of AI, we give a short description of our workflow.

We explored the construction of \cite{Engel2026} starting with the surface $S$ in Section \ref{sect: S} first by hand, then using ChatGPT \cite{openai2026chatgpt} to carry the burden of calculations.

Following the approach verbatim gives a rational homology sphere with fundamental group $\IZ/2$, where the non-contractible loop comes from the fact that local degrees of the good reduction models, $4$ at the type $\III^*$ fibre and $6$ at the type $\DI$ fibre have a common factor of $2$. Reducing the multiplicity of the log-transform at the type $\DI$ fibre to $3$ gives indeed a simply connected threefold, which however has $2b_2=b_3>0$, with classes coming from components of the now singular and reducible fibre at $1$. The next try was to perform the log-transform on a smooth fibre instead, which did not help at first until we asked if any choice of  multiplicity might lead to a $6$-sphere. At that point multiplicity zero was suggested, yielding the current construction. 

In the writing process we prescribed the structure of the paper, stating propositions, some of them already proven in our notebooks or on our blackboard, and then letting ChatGPT fill in step by step, closely monitoring notation and readability. The final version has seen several cycles of human revision and additions, then possibly AI filling in the computational parts again. Being able to globally change notations easily at a later stage was extremely beneficial for the final presentation.

The introduction was entirely written by us. We hope that the result is an interesting and  useful, human-readable paper.

\subsection{Notations}\label{sec: notation}
We will use the following notations:
\begin{itemize}
 \item $\Delta_p$ will denote a disc in $\IP^1$ centered at the point $p$. We will shrink it or perform a local analytic coordinate change when necessary. 
\item For a basis $(e_i)$ of a finite free module, we denote the dual
basis by $(e_i^*)$, so $e_i^*(e_j)=\delta_{ij}$, and write
$ e_{i_1\cdots i_k}^*=e_{i_1}^*\wedge\cdots\wedge e_{i_k}^*$ for exterior products.

\item Given a fibration  $\pi \colon S \to B$ we denote by $S_b$ the fibre over $b\in B$ and by $S_U$ the induced fibration $\inverse \pi U \to U$ for an open subset $U\subset B$. 
\item In computations, the symbol $I$ is always an identity matrix or map of the required size.
\end{itemize}

\section{Construction of the family}

\subsection{The underlying rational elliptic surface and line bundle}\label{sect: S}

Let $\pi\colon S\to\IP^1$
be the rational elliptic surface with zero
section $O$ and
singular fibres
\[
S_0\text{ of type }\III^*,\qquad
S_1\text{ of type }\DI,\qquad
S_\infty\text{ of type }\I_1,
\]
schematically depicted in Figure \ref{fig: rational elliptic surface}.
 By
\cite[Main Theorem, No.~43]{OguisoShioda1991}, 
the Mordell--Weil
group
is infinite cyclic.  We choose
a generator 
$P$ and set
$M=\ko_S(P-O)$.
\begin{figure}
\centering
\begin{tikzpicture}[x=1cm,y=1cm,line cap=round,line join=round,
 fibre/.style={draw=blue!70!white,line width=.85pt},
 mult/.style={font=\scriptsize,text=blue!65!black,inner sep=1pt}]
 \draw[gray!80,line width=.8pt]
   (0,4.45) .. controls (.25,3.2) and (-.12,1.8) .. (0,.65);
 \draw[gray!80,line width=.8pt]
   (10.2,4.4) .. controls (10,3.1) and (10.05,1.8) .. (10.2,.6);
 \draw[gray!65,line width=1pt]
   (4.2,4.35) .. controls (3.85,3.5) and (4.55,3.1) .. (4.2,2.5)
   .. controls (3.9,1.9) and (3.95,1.2) .. (4.2,.7);
 \draw[fibre] (1.65,4.6)--(2.4,3.85);
 \draw[fibre,line width=1.1pt] (1.65,3.35)--(2.4,4.1);
 \draw[fibre,line width=1.4pt] (1.65,3.6)--(2.4,2.85);
 \draw[fibre,line width=1.7pt] (1.65,2.35)--(2.4,3.1);
 \draw[fibre,line width=1.4pt] (1.65,2.6)--(2.4,1.85);
 \draw[fibre,line width=1.1pt] (1.65,1.35)--(2.4,2.1);
 \draw[fibre] (1.65,1.6)--(2.4,.85);
 \draw[fibre,line width=1.1pt] (1.1,2.75)--(2.65,2.75);
 \foreach \x/\y in {2.275/3.975,1.775/3.475,2.275/2.975,
                       1.775/2.475,2.275/1.975,1.775/1.475,2.05/2.75}
   \filldraw[draw=blue!70!black,fill=cyan!45,line width=.4pt]
     (\x,\y) circle (1.35pt);
 \node[mult] at (1.65,4.2) {$1$};
 \node[mult] at (2.55,3.72) {$2$};
 \node[mult] at (1.5,3.25) {$3$};
 \node[mult] at (2.55,2.48) {$4$};
 \node[mult] at (1.5,2.2) {$3$};
 \node[mult] at (2.55,1.73) {$2$};
 \node[mult] at (1.9,1.08) {$1$};
 \node[mult,above] at (1.27,2.78) {$2$};
 \node[blue!70!white] at (.72,1.7) {$\III^*$};
 \draw[fibre] (6.4,4.5)
   .. controls (5.9,2.8) and (5.65,2.8) .. (5.65,2.8)
   .. controls (5.65,2.8) and (5.9,2.8) .. (6.4,.8);
 \node[blue!70!white] at (5.65,2.3) {$\DI$};
 \draw[fibre] (9.3,4.6)
   .. controls (9.5,3.85) and (9.5,3.1) .. (9.05,2.6)
   .. controls (8.3,1.8) and (7.65,2.15) .. (8,2.8)
   .. controls (8.3,3.3) and (8.8,2.95) .. (9.05,2.6)
   .. controls (9.5,2) and (9.55,1.4) .. (9.3,.75);
 \filldraw[draw=blue!70!black,fill=cyan!45,line width=.4pt]
   (9.05,2.6) circle (1.5pt);
 \node[blue!70!white] at (8.55,1.9) {$\I_1$};
 \draw[red!75!black,line width=1.15pt]
   (0,4.45) .. controls (.65,4.3) and (1.1,4.6) .. (1.65,4.6)
   .. controls (2.7,4.6) and (3.3,4.25) .. (4.2,4.35)
   .. controls (5,4.45) and (5.6,4.6) .. (6.4,4.5)
   .. controls (7.5,4.3) and (8.4,4.65) .. (9.3,4.6)
   .. controls (9.65,4.58) and (9.95,4.5) .. (10.2,4.4);
 \draw[green!45!black,line width=1.15pt]
   (0,.65) .. controls (.9,.45) and (1.6,.85) .. (2.4,.85)
   .. controls (3,.85) and (3.65,.6) .. (4.2,.7)
   .. controls (5,.85) and (5.6,.9) .. (6.4,.8)
   .. controls (7.5,.55) and (8.5,.9) .. (9.3,.75)
   .. controls (9.65,.72) and (10,.65) .. (10.2,.6);
 \node[red!75!black,below] at (7.3,4.4) {$O$};
 \node[green!45!black,above] at (7.3,.65) {$P$};
 \node[right] at (10.3,2.6) {$S$};

\draw[->] (5,0.25) to node[right]{$\pi$} ++(0,-.75);

\begin{scope}[yshift = -1cm]
	 \draw[gray!80,line width=.8pt] (.05,-.25)--(10.2,-.25);
 \foreach \x/\lab in {2/0,6.4/1,9.05/\infty} {
   \fill[blue!70!white] (\x,-.25) circle (2pt);
   \node[above] at (\x,-.2) {$\lab$};
 }
 \fill[gray!65] (4.2,-.25) circle (2pt);
 \node[above] at (4.2,-.2) {$t$};
 \node[right] at (10.25,-.25) {$\IP^1$};
 \end{scope}
\end{tikzpicture}
\caption{The rational elliptic surface $S$. Numbers indicate the
component multiplicities of the $\III^*$-fibre.}
\label{fig: rational elliptic surface}
\end{figure}
We give an algebraic model and collect some properties needed
in the construction.
\begin{prop}\label{prop: surface facts}
	The surface $S$
and 
the section $P$ can be given
by
	\begin{equation}\label{eq:weierstrass-S}
		y^2=x^3+\frac{27}{4}t^3(1-t)x
		+\frac{27}{4}t^5(1-t),
		\qquad P=(-t^2,\mathrm{i}t^3),
	\end{equation}
	where $O$ is the section at infinity.  Moreover:
\begin{enumerate}
		\item
		The discriminant is $\Delta=-3^9t^9(1-t)^2$, the $j$-function is $j(t)=1728(1-t)$.
		
		\item The section
$P$ meets a non-identity component of
the fibre
		of type $\III^*$.
		
		\item There is an isomorphism $\shom_{\ko_S}(t_{2P}^*M,M)		\isom\pi^*\ko_{\IP^1}(1)$.
	
\end{enumerate}
\end{prop}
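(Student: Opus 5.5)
The plan is to verify everything directly from the Weierstrass equation \eqref{eq:weierstrass-S}, using Tate's algorithm for the fibre types, Shioda's height pairing to identify $P$ as a generator, and intersection numbers to compute the line bundle in (3).

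\textbf{The model, discriminant and fibre types.} Write the equation as $y^2=x^3+ax+b$ with $a=\tfrac{27}{4}t^3(1-t)$ and $b=\tfrac{27}{4}t^5(1-t)$. I first compute $4a^3+27b^2$:
\[
4a^3+27b^2=\tfrac{3^9}{16}t^9(1-t)^2\bigl((1-t)+t\bigr)=\tfrac{3^9}{16}t^9(1-t)^2 .
\]
Hence $\Delta=-16(4a^3+27b^2)=-3^9t^9(1-t)^2$, and $j=1728\cdot 4a^3/(4a^3+27b^2)=1728(1-t)$, which is (1).

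Next I read off the fibre types with Tate's algorithm.
\begin{itemize}
\item At $t=0$: $(\operatorname{ord}a,\operatorname{ord}b,\operatorname{ord}\Delta)=(3,5,9)$, which gives type $\III^*$.
\item At $t=1$: the orders are $(1,1,2)$, which gives type $\DI$.
\item At $t=\infty$: with $s=1/t$, the coefficients $as^4$ and $bs^6$ have orders $0$ and $1$ at $s=0$. The discriminant has degree $11$ in $t$, so its order at $\infty$ is $12-11=1$, which gives type $\I_1$.
\end{itemize}
All orders are small enough that the model is minimal. The Euler numbers add up to $9+2+1=12$, so the surface is rational and is the surface $S$ of Section~\ref{sect: S}.

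To check that $P$ lies on the curve, substitute $x=-t^2$: the terms $ax$ and $b$ cancel, so the right-hand side is $-t^6=(\mathrm{i}t^3)^2$.

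\textbf{Statement (2) and $P$ as a generator.} At $t=0$ the section $P=(-t^2,\mathrm{i}t^3)$ passes through $(0,0)$, which is the singular point of the cuspidal Weierstrass fibre. The identity component consists exactly of the smooth points of the Weierstrass fibre, so $P$ meets a non-identity component. The component group of a $\III^*$ fibre is $\IZ/2$, so this must be the unique simple component $\Theta$ other than $\Theta_0$, whose local height contribution is $3/2$.

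Since $x$ and $y$ are polynomials of degree at most $2$ and $3$ in $t$, we have $P\cdot O=0$. Shioda's formula then gives $h(P)=2+0-\tfrac32=\tfrac12$. For case No.~43 of \cite{OguisoShioda1991} the Mordell--Weil lattice is $A_1^*$, whose minimal norm is $1/2$, so $P$ is a generator.

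\textbf{Statement (3).} Translation by $2P$ sends a section $Q$ to the section $Q+2P$, so $t_{2P}^{-1}(P)=-P$ and $t_{2P}^{-1}(O)=-2P$. Therefore
\[
\shom(t_{2P}^*M,M)=\ko_S(D),\qquad D=P-O-(-P)+(-2P).
\]
Since $P+(-2P)-(-P)=0$ in the group law, $D$ restricts to a principal divisor on every smooth fibre. Hence $D$ lies in the trivial lattice spanned by $F$ and the fibre components; on a rational surface, numerical and linear equivalence agree.

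To identify $D$, I intersect it with the non-identity components of $S_0$.
\begin{itemize}
\item The sections $P$ and $-P$ meet the same component $\Theta$, because the component group is $\IZ/2$.
\item The sections $O$ and $-2P$ meet $\Theta_0$.
\item The fibres $S_1$ and $S_\infty$ are irreducible, so they contribute nothing.
\end{itemize}
So $D\cdot\Theta_i=0$ for every non-identity component. By negative definiteness of the span of the non-identity components, $D\equiv kF$.

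Finally I compute $k=D\cdot O$. We have $P\cdot O=(-P)\cdot O=0$, since $-P=(-t^2,-\mathrm{i}t^3)$ is also polynomial. We have $O^2=-1$. For $2P$, note $h(2P)=4h(P)=2$ and $2P$ meets $\Theta_0$, so Shioda's formula reads $2=2+2(2P\cdot O)$ and forces $(-2P)\cdot O=0$. Thus $k=0-(-1)-0+0=1$, and $\ko_S(D)\isom\pi^*\ko_{\IP^1}(1)$.

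I expect the main care to be needed in the sign and direction conventions for $t_{2P}^*$ (whether preimages are $-P$ or $3P$). With the other convention the same method applies, but the component bookkeeping and the height input $2P\cdot O=0$ would have to be redone accordingly.
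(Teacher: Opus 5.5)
Your proof is correct and is essentially the paper's argument: vanishing orders and Kodaira's table give the fibre types, Shioda's height formula gives $\langle P,P\rangle=\tfrac12$ and $2P\cdot O=0$, and part (3) follows by showing the bundle is vertical of degree zero on every fibre component and reading off its degree on $O$. The differences are minor: you write the bundle explicitly as $\ko_S(P-O-(-P)+(-2P))$ and get $(-P)\cdot O=0$ directly from the coordinates, whereas the paper gets the same number in the form $P\cdot 2P=0$ from $\langle P,2P\rangle=1$; also, your order of $bs^6$ at $\infty$ should be $0$ rather than $1$, which is harmless since $\operatorname{ord}_\infty\Delta=1$ already forces type $\I_1$.
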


\begin{proof}
	The formulas for $\Delta$ and $j$ follow directly from
	\eqref{eq:weierstrass-S}. The vanishing orders of the Weierstrass
	coefficients and the discriminant are $(3,5,9)$ at $t=0$ and
	$(1,1,2)$ at $t=1$.  Kodaira's classification
	\cite[Chapter~IV, \S3]{Miranda1989} therefore gives fibres of types
	$\III^*$ and $\DI$, respectively.

	At infinity, put $v=t^{-1}$, $X=v^2x$, and $Y=v^3y$. Then the equation becomes $Y^2=X^3+\frac{27}{4}(v-1)X+\frac{27}{4}(v-1)$,
	whose discriminant has a simple zero at $v=0$. Thus the fibre at
	infinity has type $\I_1$.

	The section $P$ is disjoint from $O$ and specializes at $t=0$ to the
	singular point $(0,0)$ of the minimal Weierstrass model. It therefore
	meets a non-identity component of the $\III^*$-fibre. Its local height
	contribution is $3/2$ by
	\cite[Theorem~8.6 and Table~8.16]{Shioda1990}, 
    so
\[	\langle P,P\rangle
	=2+2(P\cdot O)-\frac32
	=\frac12.
	\]
	This agrees with the fact that a generator of the Mordell--Weil
	lattice $A_1^*$ has height $1/2$ (\cite[Table~6.10]{Shioda1990}).

The section $Q:=2P$ meets the identity component of the $\III^*$-fibre.
	Hence all its local height contributions vanish, and
$2=\langle Q,Q\rangle=2+2(Q\cdot O)$.
	Thus $ Q\cdot O=0$. Similarly, $1=\langle P,Q\rangle=1-P\cdot Q$,
	so $P\cdot Q=0$.

	Set
	\[
	L=\shom_{\ko_S}(t_{Q}^*M,M).
	\]
	On a smooth fibre, translation acts trivially on degree-zero line
	bundles. Moreover, $Q$ preserves every component of the reducible
	fibre. Consequently $L$ is vertical and has degree zero on every
	fibre component. Therefore
$L\isom\pi^*\ko_{\IP^1}(n)$
	for some $n\in\IZ$. Restricting to the zero section gives
	\[
	n=L\cdot O
	=(O-P)\cdot Q-(O-P)\cdot O
	=1.
	\]
	This proves the last assertion.
\end{proof}
\subsection{The open family $X(u)^\circ$}
Let
$M^\times$ 
be the complement of the zero section in $M$
and put
\[
	S^\circ=S\setminus S_\infty,
	\qquad
	M^{\times,\circ}=M^\times|_{S^\circ}.
\]
Thus $\pi^\circ\colon S^\circ\to\IC$ is
the restriction of $\pi$
to
the affine line $\IP^1\setminus\{\infty\}$.

By Proposition~\ref{prop: surface facts},
there is, up to a non-zero
scalar, a unique homomorphism
$\phi\colon t_{2P}^*M\longrightarrow M$
 over $S$,
whose divisor is $S_\infty$, thus its restriction to $S^\circ$ is an
isomorphism.  For $u\in\IC^*$ we consider the
$\IZ$ action 
on
$M^{\times,\circ}$  generated by
\begin{equation}\label{eq: Z action}
	\tau(s,m)=
	\left(t_{-2P}(s),u\phi(t_{2P}^*m)\right).
\end{equation}
Here,
for $m\in M_s$,
$t_{2P}^*m$ denotes the same $m$, but
viewed as an element of
$(t_{2P}^*M)_{t_{-2P}(s)}=M_s$.
\begin{prop}\label{prop: construction X}
For every sufficiently small $u\in\IC^*$
the following hold.
\begin{enumerate}
	\item The action~\eqref{eq: Z action}
is free and
properly
	discontinuous.  The
quotient
	\[
		X(u)^\circ=M^{\times,\circ}/\langle\tau\rangle
	\]
is a complex manifold, and the induced map
	$\eta^\circ\colon X(u)^\circ\to\IC$ is proper.  It fits into
the
	diagram
	\[
	\begin{tikzcd}[ampersand replacement=\&]
		M^{\times,\circ}\rar{/\langle\tau\rangle}\dar
		\&X(u)^\circ\dar{\eta^\circ}\\
		S^\circ\rar{\pi^\circ}\&\IC.
	\end{tikzcd}
	\]
\item Over $\IC\setminus\{0,1\}$, the map $\eta^\circ$
is a smooth
	family of
compact complex two-dimensional tori.
\item The map $\eta^\circ$ admits
a holomorphic
section
	$\sigma^\circ$.  
\end{enumerate}
\end{prop}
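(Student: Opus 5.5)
The plan is to build a real-valued function on $M^{\times,\circ}$ that $\tau$ strictly decreases. Fix a hermitian metric $h$ on $M$ over the compact surface $S$, and set $\rho(m)=\log|m|_h$ on $M^\times$.

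\emph{Estimating $\tau$ against $\rho$.} Let $\phi\in H^0(S,\shom(t_{2P}^*M,M))$ be the homomorphism of Proposition~\ref{prop: surface facts}. Put on $\shom(t_{2P}^*M,M)$ the induced metric, so that the pointwise norm $|\phi|$ is a continuous function on $S$. Since $S$ is compact, there is a constant $C$ with $\log|\phi|\le C$ everywhere. Then for every $m$ we get
\[
\rho(\tau(m))=\log|u|+\log|\phi(t_{2P}^*m)|_h\le \rho(m)+\log|u|+C .
\]
Hence for $|u|$ small, $\rho(\tau m)\le\rho(m)-\delta$ for some fixed $\delta>0$, uniformly on $M^{\times,\circ}$.

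Conversely, $\phi$ vanishes only along $S_\infty$. So over $\inverse{\pi}{K}$, for any compact $K\subset\IC$, we have $\log|\phi|\ge -C_K$, which gives $\rho(\tau m)\ge\rho(m)+\log|u|-C_K$.

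\emph{Part (1).} Since $t_{-2P}$ preserves the fibres of $\pi$, the map $\tau$ commutes with the projection $M^{\times,\circ}\to\IC$. Freeness follows from the strict decrease of $\rho$.

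Proper discontinuity: take a compact set $L\subset M^{\times,\circ}$. Then $\rho$ is bounded on $L$, and $\rho$ drops by at least $n\delta$ under $\tau^n$. Hence $\tau^nL\cap L=\emptyset$ for $|n|$ large, and the quotient is a complex manifold.

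Properness of $\eta^\circ$: over $\inverse{\pi}{K}$, each orbit meets the slab $\{0\le\rho\le D_K\}$ with $D_K=-\log|u|+C_K$, because each step of $\tau$ changes $\rho$ by an amount in $[\delta,D_K]$. This slab lies over the compact set $\inverse{\pi}{K}$ and is closed and bounded in the fibres of $M$, hence compact. It surjects onto $\inverse{(\eta^\circ)}{K}$, which is therefore compact.

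I expect this metric estimate to be the main point. The subtle feature is that $\phi$ degenerates at $S_\infty$, so the lower bound on $\log|\phi|$ only holds over compacta. That is exactly why we work over $\IC$ and only obtain properness over $\IC$.

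\emph{Part (2).} For $t\ne0,1$, the fibre $E_t=S_t$ is an elliptic curve and $M|_{E_t}$ has degree $0$. Hence $G_t:=M^\times|_{E_t}$ is a commutative complex Lie group, an extension of $E_t$ by $\IC^*$.

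Any bundle isomorphism $t_{-2P}^*M|_{E_t}\to M|_{E_t}$ covering translation is, up to a scalar, translation by a point of $G_t$ lying over $-2P(t)$. Since $\shom$ is trivial on the fibre, this shows that $\tau|_{G_t}$ is translation by some $g_t\in G_t$.

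Thus $\inverse{(\eta^\circ)}{t}=G_t/\langle g_t\rangle$. This is a compact connected commutative complex Lie group of dimension $2$, hence a complex torus.

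Over $\IC\setminus\{0,1\}$, the map $\eta^\circ$ is a proper holomorphic submersion, since $\pi$ is smooth there and the quotient map is étale. So it is a smooth family of compact complex tori.

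\emph{Part (3).} Restrict $M$ to the zero section $O\cong\IP^1$. Its degree is $(P-O)\cdot O=0+1=1$, so over the affine line $O\cap S^\circ\cong\IC$ it is trivial. Choose a nowhere vanishing section $s\colon\IC\to M^{\times,\circ}|_O$. Composing $s$ with the quotient map gives the holomorphic section $\sigma^\circ$ of $\eta^\circ$.
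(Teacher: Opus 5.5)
Your proof is correct and follows the paper's route. For (iii) the paper argues exactly as you do, via $\deg M|_O=1$ and a section of $M|_O$ that does not vanish over $\IC$. For (i) and (ii) the paper only cites the contracting-quotient construction of \cite[Propositions~2.4 and~2.6]{Engel2026}, noting that $M_t$ has degree zero and that $\tau$ lifts translation by $-2P$; your estimate for $\log|m|_h$ and your identification of $\tau|_{G_t}$ with a translation in the commutative group $G_t$ write that argument out explicitly and self-containedly.
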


\begin{proof}
By Proposition~\ref{prop: surface facts}, the linearization has the
same properties as in \cite[Section~2]{Engel2026}, with $2P$ in place
of $6P$.  The contracting quotient construction there gives a proper
fibration with smooth total space; freeness and proper discontinuity
follow as in \cite[Proposition~2.4]{Engel2026}.

On each smooth fibre $S_t$, the bundle $M_t$ has degree zero and
$\tau$ lifts translation by $-2P$.  Thus
\cite[Proposition~2.6]{Engel2026} applies and gives (ii).

Finally,
\[
	\deg(M|_O)=(P-O)\cdot O=1,
\]
so $M|_O\simeq\ko_{\IP^1}(1)$.  Choose a section $e$ of $M|_O$ whose
only zero is at $\infty$.  Over $\IC$, it gives a section of
$M^{\times,\circ}\to\IC$, and its image in the quotient defines
$\sigma^\circ$.  
\end{proof}

From here on we fix a suitable $u$, sufficiently small for all
the constructions below, and drop it from the notation, as it will
play no further role.

\subsection{Compactification over the $\I_1$-fibre---Mumford construction}

The compactification at infinity is the same as in
\cite[Propositions~2.7--2.9]{Engel2026} and we defer to the expert for some details.

\begin{prop}\label{prop: compactification X}
The fibration $\eta^\circ\colon X^\circ\to\IC$ extends to a proper
map $\eta\colon X\to\IP^1$ from a smooth compact complex threefold.
The fibre $X_\infty$ is obtained from a del Pezzo surface of degree six
by identifying opposite sides of its anticanonical hexagon.  The
section $\sigma^\circ$ extends to a holomorphic section $\sigma$ of
$\eta$.
\end{prop}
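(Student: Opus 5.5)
We follow the Mumford degeneration construction of \cite[Propositions~2.7--2.9]{Engel2026}, using that near $\infty$ the only inputs are the local structure of $S$ over $\Delta_\infty$ and the isomorphism $\phi$. By Proposition~\ref{prop: surface facts}, $\phi\colon t_{2P}^*M\to M$ vanishes exactly along $S_\infty$, to first order since $n=1$. On the punctured disc $\Delta_\infty^*$ the elliptic fibration $S_{\Delta_\infty^*}$ is uniformised as $\IC^*/q^{\IZ}$ with $q=v$ up to a unit, because $S_\infty$ is of type $\I_1$. The total space $M^\times|_{\Delta^*_\infty}$ is a $\IC^*$-bundle over it.

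The key steps are as follows.

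\textbf{Monodromy data.} Pull back to the universal cover and write $M^{\times,\circ}|_{\Delta^*_\infty}$ as a quotient of $(\IC^*)^2\times\Delta^*_\infty$ by a rank-two lattice. One generator is the monodromy-invariant period $q$ of the elliptic fibre. The other is $\tau$, which translates by $-2P$ and scales the fibre coordinate by $u\phi\sim uv$. Together they form a degenerating family of abelian surfaces $(\IC^*)^2/\Lambda_v$ with period matrix of the shape
\[
\begin{pmatrix} v & v^{a}w\\ v^{a}w & uv\,c(v)\end{pmatrix},
\]
with $c$ a unit. The off-diagonal exponents are determined by the local height of $P$ at $S_\infty$ and by $P\cdot O=0$ there. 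Consequently the monodromy logarithm $B$ is a positive definite integral quadratic form. Its determinant should be $3$, since the fibre is a hexagon.

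\textbf{Toric model.} Choose the $B$-periodic polyhedral decomposition of $\IR^2$ into triangles, the Delaunay decomposition for the hexagonal form. This defines a locally finite toric variety $\tilde{\mathcal X}\to\IC$ on which $\IZ^2$ acts via $B$, freely and properly discontinuously over a small disc $\Delta_\infty$. Its central fibre is a union of toric surfaces glued along boundary curves. Modulo the $\IZ^2$-action there is a single orbit of vertices, so a single component. The star of a vertex is a hexagon, so that component is the toric del Pezzo surface of degree six, and the gluing identifies opposite sides of its anticanonical hexagon.

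\textbf{Gluing and section.} Glue the quotient $\tilde{\mathcal X}/\IZ^2$ to $X^\circ$ over $\Delta_\infty^*$ using the identification from the first step; smoothness follows from smoothness of the fan. The section $\sigma^\circ$ comes from $e$ with its simple zero at $\infty$. In the torus coordinates it is $v\mapsto(1,v^{-1}e_0)$, which lies in a single $\IZ^2$-translate of a coordinate chart. After using $\tau$ to renormalise the exponent, it extends holomorphically through $v=0$, landing in the open torus of the central component.

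The main obstacle is the first step. One has to show that the monodromy form is positive definite and equivalent to the hexagonal lattice, and that the extra factor $u$ does not spoil integrality. Since $u$ is a constant, it enters only as a unit, so it affects the extension class but not $B$. We would first verify this by a direct computation of the periods in the coordinates $X,Y$ at infinity, and only then appeal to \cite{Engel2026} for the remaining verifications.
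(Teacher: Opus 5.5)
\textbf{The key step is wrong, and as stated it contradicts your conclusion.} Your overall route is the paper's: verify the hypotheses of Engel's Mumford construction at the $\I_1$ fibre, fill, glue, and extend the section. But the step you single out as the key one aims at a false target. The monodromy logarithm at $\infty$ is not a hexagonal form of determinant $3$; it is the identity.

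Since $S_\infty$ is irreducible, $2P$ and the degree-zero bundle $M$ specialise into the smooth locus $\IC^*$ of the nodal fibre. Hence translation by $-2P$ and the multiplier of $M$ along the Tate period are holomorphic units. The Tate period itself has valuation one, and $u\phi$ vanishes to first order along $S_\infty$; as you say, the constant $u$ is absorbed into a unit. This gives the periods $(c_1q,c_2)$ and $(c_3,c_4q)$ with units $c_i$, as in the paper. So the valuation matrix is $I$, consistent with $T_\infty$ being two unipotent Jordan blocks in Proposition~\ref{prop: monodromy and specialisation at infinity}.

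Your own displayed shape already forces this. Diagonal valuations $1,1$ with an integral off-diagonal exponent $a$ are positive definite only for $a=0$. The period matrix also need not be symmetric: the fibres are not polarised in general, and $c_2,c_3$ are unrelated units.

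\textbf{Why the determinant matters.} In the toric model, $\IZ^2$ acts on $\IR^2$ through the translation lattice $B\IZ^2$. Components of the central fibre correspond to vertices modulo $B\IZ^2$.
\begin{itemize}
\item A smooth total space needs a unimodular triangulation, whose vertex set is all of $\IZ^2$. With $\det B=3$ this gives three vertex orbits, hence three components.
\item A decomposition with a single vertex orbit has all vertices in one coset of $B\IZ^2$. Its cells have lattice area divisible by $3$, so they are not unimodular simplices and the cones over them are singular.
\end{itemize}
So ``$\det B=3$'', ``smooth'' and ``one component'' cannot all hold, and the period computation you propose would be checking against the wrong answer.

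\textbf{Where the hexagon comes from.} It is not a property of $B$. It is the star of a vertex in the triangulation obtained by cutting every unit square along the same diagonal. The paper uses this triangulation with translation lattice $\IZ^2$. That triangulation is the Delaunay triangulation of a hexagonal form on $\IZ^2$, but that form is not the monodromy form. The Delaunay decomposition of $B=I$ itself is the square decomposition. It would give a $\IP^1\times\IP^1$ component and conifold points in the total space, which is why a diagonal has to be chosen.

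\textbf{The section.} With the correct data your section argument goes through. The simple zero of $e$ gives torus coordinates $(c,c'q)$ with units $c,c'$ (so $q$, not $q^{-1}$). Any lattice valuation vector is a vertex of the triangulation, so $\sigma$ extends into the open torus of the unique component.
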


\begin{proof}
The fibre $S_\infty$ has type $\I_1$, the bundle $M$ has degree zero
on it, and $2P$ specializes to its smooth locus.  Moreover, $\phi$
vanishes to first order along $S_\infty$.  Thus the hypotheses of
\cite[Proposition~2.7]{Engel2026} hold, with $2P$ in place of $6P$.
For a local coordinate $q$ at infinity, the fibres over a punctured
disk therefore have the form
\[
 X_q\isom (\IC^*)^2/
 \langle(c_1q,c_2),(c_3,c_4q)\rangle,
\]
where the $c_i$ are holomorphic units on the disk and the displayed
pairs act by coordinatewise multiplication.

We use the $\IZ^2$-periodic triangulation obtained by dividing each
unit square along the same diagonal.  The Mumford construction of
\cite[Proposition~2.8]{Engel2026} then gives the required filling,
with smooth total space and central fibre as stated.  Gluing it to
$X^\circ$ gives $X$; since $\eta$ is proper, $X$ is compact.

Finally, $\sigma^\circ$ was constructed from a section of $M|_O$
with a simple zero at infinity.  Its extension across the Mumford
filling follows exactly as in \cite[Proposition~2.9]{Engel2026}.
\end{proof}
\subsection{Logarithmic transformation at the $\III^*$-fibre}
We follow \cite[Section~3]{Engel2026}, using the local model of
\cite[V.10]{BHPV2004}. After a suitable analytic coordinate change
at $0$, we get an explicit description of  the good reduction model after a cyclic base change,
\[ 
\begin{tikzcd}
	 \tilde S_{\Delta_0} \rar[dashed] \dar & S_{\Delta_0}\dar\\
	 \tilde \Delta_0 \rar{s\mapsto s^4} & \Delta_0 
\end{tikzcd}
\]
as the elliptic surface\footnote{In the normal form the exponent of $s$ in $z(s)$ is determined as follows: the function $j-1728$ has a simple zero on $\Delta_0$, hence a zero of order four after base change.  Since the modular
$j$-function has ramification index two at $i$, the period $z(s)-i$
has a zero of order two.}
\[
 \tilde S_{\Delta_0}=(\IC\times\tilde\Delta_0)/(\IZ+z(s)\IZ)
 \qquad
 z(s)=i\frac{1+s^2}{1-s^2}.
\]
The deck group is generated by
\begin{equation}\label{eq: rho}
 \rho(c,s)=\left(\frac{c}{z(s)},is\right).
\end{equation}
Indeed, $z(is)=-1/z(s)$, so this preserves the period lattices and
defines an action of order four 
 on $\tilde S_{\Delta_0}$.

On $\tilde S_0=\IC/(\IZ+i\IZ)$, the action is multiplication by $-i$.
Its fixed points are $0$ and $(1+i)/2$, while $1/2$ and $i/2$ form
one orbit with stabilizer of order two.  Thus
$\tilde S_{\Delta_0}/\langle\rho\rangle$ has two $A_3$ singularities and one
$A_1$ singularity and its minimal resolution is $S_{\Delta_0}$.

\begin{lem}\label{lem: good reduction at 0}
Let $\tilde P$ and $\tilde O$ be the transforms of $P$ and $O$ on
$\tilde S_{\Delta_0}$, and let $\tilde M=\ko_{\tilde S_{\Delta_0}}(\tilde P-\tilde O)$.
Then the $\IZ/4$ action lifts to an action on $\tilde M$, which we also denote by $\rho$, and the following hold:
\begin{enumerate}
 \item $\tilde P(0)=(1+i)/2$, and $\tilde M|_{\tilde S_0}$ is a
 non-trivial two-torsion line bundle.

 \item The action $\tau$ from \eqref{eq: Z action}  lifts to a $\rho$-equivariant action
 $\tilde\tau$ on $\tilde M^\times$.
The quotient $\tilde X_{\Delta_0}=\tilde M^\times/\langle\tilde\tau\rangle$
 is a smooth family of complex two-tori over $\tilde\Delta_0$, and
 $X_{\Delta_0}$ is a resolution of $\tilde X_{\Delta_0}/\langle\rho\rangle$.

 \item On $\tilde M^\times|_{\tilde S_0}$, the action $\tilde\tau$
 is multiplication by a scalar $\alpha$ with $0<|\alpha|<1$.
\end{enumerate}
\end{lem}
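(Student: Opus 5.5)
We will argue in three steps, following \cite[Section~3]{Engel2026} throughout and replacing $6P$ by $2P$ and the $\IZ/6$ action by the $\IZ/4$ action $\rho$ from \eqref{eq: rho}.

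\emph{Step 1: the points $\tilde P(0)$ and the restriction of $\tilde M$.} By Proposition~\ref{prop: surface facts}(ii), $P$ meets a non-identity component of the $\III^*$-fibre. The components of multiplicity one are the identity component and the far end of the long arm, and the far-end component arises from resolving the $A_3$ point not lying on $O$. Since $\tilde O(0)=0$, the curve $\tilde P$ must pass through the other fixed point of $\rho$ on $\tilde S_0$, so $\tilde P(0)=(1+i)/2$. The difference of two sections is then $(1+i)/2$ on every nearby fibre, a nonzero $2$-torsion point of $\IC/(\IZ+i\IZ)$. By Abel's theorem $\ko(\tilde P(0)-\tilde O(0))$ is therefore a non-trivial two-torsion bundle, which proves (i). Consistently, $2\tilde P$ specialises to $0$, matching the fact that $Q=2P$ meets the identity component.

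\emph{Step 2: lifting $\rho$ and $\tau$.} The divisor $\tilde P-\tilde O$ is $\rho$-invariant, because $\rho$ fixes both sections pointwise on the central fibre and maps sections to sections. Hence $\rho^*\tilde M\isom\tilde M$. We choose a linearisation and normalise it so that $\rho^4=\mathrm{id}$; this is possible because $\tilde M$ has no automorphisms beyond scalars on the compact fibres, so $\rho^4$ acts by a constant, which we can rescale to $1$. The homomorphism $\phi$ pulls back to an isomorphism $\tilde\phi\colon t_{2\tilde P}^*\tilde M\to\tilde M$ over $\tilde\Delta_0$, since the divisor of $\phi$ is $S_\infty$, away from $0$. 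Then $\tilde\tau$ is defined by the same formula \eqref{eq: Z action}. Translation by $2\tilde P$ commutes with $\rho$ because $\rho$ is a group automorphism fibrewise fixing $2\tilde P$. Consequently $\rho\tilde\tau\rho^{-1}\tilde\tau^{-1}$ is a fibrewise constant automorphism of $\tilde M^\times$, i.e.\ multiplication by a unit function $c(s)$ on $\tilde\Delta_0$. It agrees with the identity away from $s=0$ after base change, since there it descends from $S$. Thus $c\equiv1$ and equivariance holds.

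\emph{Step 3: the central fibre and the quotient.} Freeness, proper discontinuity and the torus structure of $\tilde X_{\Delta_0}$ follow from \cite[Proposition~2.4, 2.6]{Engel2026} applied on the smooth family $\tilde S_{\Delta_0}$. Over $s=0$ the translation is by $2\tilde P(0)=0$, so $\tilde\tau$ is a bundle automorphism of the degree-zero bundle $\tilde M|_{\tilde S_0}$ on a compact curve, hence multiplication by a scalar $\alpha$. We have $|\alpha|<1$ because $\alpha=u\cdot(\text{value of }\tilde\phi)$ and $u$ is small, and $\alpha\neq0$ because $\tilde\phi$ is an isomorphism there. This gives (iii). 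The quotient $\tilde X_{\Delta_0}/\langle\rho\rangle$ is birational to $X_{\Delta_0}$ and agrees with it away from the central fibre. The claim that $X_{\Delta_0}$ resolves it follows as in \cite[Section~3]{Engel2026}, using the local model of \cite[V.10]{BHPV2004} and the fact that $S_{\Delta_0}$ is the minimal resolution of $\tilde S_{\Delta_0}/\langle\rho\rangle$.

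The main obstacle is the normalisation in Step 2, namely showing that the linearisation of $\rho$ can be chosen to commute exactly with $\tilde\tau$ and to have order four simultaneously. I would first verify this on the central fibre: there $\rho$ acts on the fibres of the $2$-torsion bundle over the fixed points by roots of unity, and this is compatible with the scalar $\alpha$. The statement would then be extended to all of $\tilde\Delta_0$ by continuity of $c(s)$.
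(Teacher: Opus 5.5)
Your proposal follows the paper's proof. Part (i) uses the two multiplicity-one components of the $\III^*$-fibre, which correspond to the $\rho$-fixed points $0$ and $(1+i)/2$. Part (ii) defers lifting, freeness and the resolution statement to \cite[Section~3]{Engel2026}. Part (iii) holds because $2\tilde P(0)=0$ makes $\tilde\tau$ a fibrewise scalar, small by the choice of $u$. Your commutator argument for $\rho$-equivariance spells out what the paper delegates to Engel. Several local justifications are wrong, although the claims they support are true.

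\emph{Invariance of $\tilde P-\tilde O$.} This does not follow from $\rho$ fixing the points $\tilde P(0),\tilde O(0)$ and mapping sections to sections. Many local sections pass through $(1+i)/2$, for example $s\mapsto(1+i)/2+s$, and $\rho$ moves such a section to a different one with the same central value. The correct reason is that $\tilde P,\tilde O$ are the transforms of $P,O$ under the base change $s\mapsto s^4$, whose deck transformation is $\rho$.

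\emph{Nearby fibres.} The difference $\tilde P-\tilde O$ is not $(1+i)/2$ on nearby fibres. That would make $\tilde P$ a $2$-torsion section and force $2P=O$, contradicting that $P$ generates an infinite Mordell--Weil group. Only the central value enters Abel's theorem, so (i) is unaffected.

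\emph{The ``main obstacle''.} This is not an obstacle. Since $\tilde\tau$ covers the identity of $\tilde\Delta_0$, rescaling the linearisation of $\rho$ by a unit $g(s)$ leaves $\rho\tilde\tau\rho^{-1}\tilde\tau^{-1}$ unchanged. Moreover, the canonical linearisation of $\ko(D)$ for the invariant divisor $D=\tilde P-\tilde O$ already has order four.

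\emph{What actually needs care.} You must know that $\tilde\phi$ is exactly the pullback of $\phi$ over $s\neq0$ and that it extends as an isomorphism across $\tilde S_0$. If you had to renormalise it by $s^{-k}$, the commutator would become the fourth root of unity $i^{\pm k}$, which is non-trivial unless $4\mid k$. The extension does hold, for the following reason. The map $\tilde S_{\Delta_0}\dashrightarrow S_{\Delta_0}$ is a morphism away from the finitely many points of $\tilde S_0$ with non-trivial stabiliser. It sends $\tilde S_0$ generically onto the multiplicity-four component, where $\phi$ is an isomorphism. Hartogs then extends $\tilde\phi$ over the remaining points. Your phrase ``since the divisor of $\phi$ is $S_\infty$'' points in this direction, but the rationality of the comparison map should be addressed.
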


\begin{proof}
The action lifts, because $\tilde P$ and $\tilde O$ are invariant under $\rho$.

The two multiplicity-one components of the $\III^*$-fibre are the
outer ends of the two $A_3$-chains in the resolution.  They correspond
to the fixed points $0$ and $(1+i)/2$ of $\rho$.  Since $O$ and $P$
meet different components by Proposition~\ref{prop: surface facts},
their transforms specialize to these two distinct points.  Thus
$\tilde P(0)=(1+i)/2$, proving (i).

The lifting and quotient arguments of
\cite[Proposition~3.1 and Corollary~3.3]{Engel2026} apply with $2P$
in place of $6P$, giving (ii).  Since $2\tilde P(0)=0$, the lifted
action is trivial 
 along
$\tilde S_0$ and acts by a constant scalar $\alpha$ 
 along the fibre direction of
the line bundle. By our choice of $u$, multiplication with $\alpha$ is a contraction, so its norm is at most one. 
\end{proof}

\begin{cor}\label{cor: pullback to product central fibre at 0}
There is a $\lambda\in\IC$ with $\operatorname{Im}\lambda>0$ such that
$ \tilde X_0\isom \hat X_0 /\langle\iota\rangle$ where 
\[
 \hat  X_0=
 \left(
 \frac{\IC}{2\IZ+(1+i)\IZ}
 \times
 \frac{\IC}{\IZ+\lambda\IZ}
 \right) \text { and }
 \iota(c,w)=\left(c+1,w+\frac12\right).
\]
In particular, $\tilde X_0$ is an abelian surface.
\end{cor}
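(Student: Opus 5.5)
The plan is to make the central fibre completely explicit. By Lemma~\ref{lem: good reduction at 0}, $\tilde X_0$ is the quotient of $\tilde M^\times|_{\tilde S_0}$ by $\tilde\tau$. On this fibre, $\tilde\tau$ covers the identity of $\tilde S_0=\IC/(\IZ+i\IZ)$ and acts on the fibres by the constant scalar $\alpha$ with $0<|\alpha|<1$. So I only need to:
\begin{itemize}
\item describe the total space of $L^\times$, where $L=\tilde M|_{\tilde S_0}$;
\item then divide by scalar multiplication by $\alpha$.
\end{itemize}

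\emph{Step 1: uniformise $L^\times$.} By Lemma~\ref{lem: good reduction at 0}(i), $L$ is a non-trivial $2$-torsion line bundle on the elliptic curve $E=\tilde S_0$. It therefore carries a flat unitary structure with monodromy given by a character $\chi\colon \IZ+i\IZ\to\{\pm1\}$, and
\[ L^\times\isom (\IC\times\IC^*)/(\IZ+i\IZ), \qquad \gamma\cdot(c,m)=(c+\gamma,\chi(\gamma)m). \]
Multiplication by $\alpha$ on the second factor commutes with this action. Hence it is exactly the induced action of $\tilde\tau$, since $\tilde\tau$ is a constant scalar on $L^\times$.

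\emph{Step 2: pin down $\chi$.} There are three nontrivial characters, with kernels $2\IZ+i\IZ$, $\IZ+2i\IZ$ and $2\IZ+(1+i)\IZ$. The action $\rho$ lifts to $\tilde M$, and on $\tilde S_0$ it is multiplication by $-i$. Hence $L$ is invariant under pullback by $c\mapsto -ic$, so $\chi(i\gamma)=\chi(\gamma)$. This forces $\chi(1)=\chi(i)=-1$, so $\ker\chi=2\IZ+(1+i)\IZ$. Of the three characters, only this one is invariant under multiplication by $i$. Getting this identification right is the main point: the other two choices would yield a different lattice. I expect this step to be the only real obstacle.

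\emph{Step 3: linearise the fibre direction.} Write $m=e^{2\pi i w}$ and choose $\lambda$ with $e^{2\pi i\lambda}=\alpha$. Then $|\alpha|=e^{-2\pi\operatorname{Im}\lambda}<1$ gives $\operatorname{Im}\lambda>0$. With this substitution:
\begin{itemize}
\item $\IC^*/\alpha^{\IZ}\isom\IC/(\IZ+\lambda\IZ)$;
\item the sign $\chi(\gamma)=-1$ becomes the translation $w\mapsto w+\tfrac12$.
\end{itemize}
Thus $\tilde X_0=\IC^2/\Gamma$, where $\Gamma$ is generated by $(0,1)$, $(0,\lambda)$, $(1,\tfrac12)$ and $(i,\tfrac12)$.

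The subgroup $(2\IZ+(1+i)\IZ)\times(\IZ+\lambda\IZ)$ has index $2$ in $\Gamma$, and its quotient is $\hat X_0$. A coset representative is $(1,\tfrac12)$; note that $(i,\tfrac12)\equiv(1,\tfrac12)$ modulo the subgroup, because $(i-1,0)$ lies in it. Therefore $\tilde X_0=\hat X_0/\langle\iota\rangle$, with $\iota$ acting freely as a translation of order $2$.

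Finally, $\hat X_0$ is a product of elliptic curves, hence projective. Its quotient by a finite group of translations is again a complex torus isogenous to it, hence an abelian surface.
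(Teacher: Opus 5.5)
Your proposal is correct and follows essentially the same route as the paper. Both represent $\tilde M|_{\tilde S_0}$ by the character $m+in\mapsto(-1)^{m+n}$ with kernel $2\IZ+(1+i)\IZ$, linearise the fibre direction via $\alpha=\exp(2\pi i\lambda)$, and recover $\tilde X_0$ as the quotient of $\hat X_0$ by the translation $(1,\tfrac12)$. The one difference is that the paper simply asserts the character (presumably from $\tilde P(0)=(1+i)/2$ in Lemma~\ref{lem: good reduction at 0}), whereas your $\rho$-invariance argument $\chi(i\gamma)=\chi(\gamma)$ actually justifies it.
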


\begin{proof}
The line bundle $\tilde M|_{\tilde S_0}$ is represented by the
character
(\cite[Section~2.2]{birkenhake2004complex})
\[
\varphi\colon \IZ+i\IZ\rightarrow \IC^*, \quad m+in\mapsto (-1)^{m+n}
\]
which is $-1$ on both $1$ and $i$.  Its kernel is
$2\IZ+(1+i)\IZ$, so the double cover
\[
 \hat S_0=\frac{\IC}{2\IZ+(1+i)\IZ}
 \longrightarrow\tilde S_0
\]
trivializes it.  On $\hat S_0\times\IC^*$ the deck involution is
$(c,v)\mapsto(c+1,-v)$, while $\tilde\tau$ acts by
$(c,v)\mapsto(c,\alpha v)$.  Choose $\lambda$ with
$\alpha=\exp(2\pi i\lambda)$.  Passing to the quotient by
$\tilde\tau$ gives the stated description, as in
\cite[Corollary~3.2]{Engel2026}.
\end{proof}

Choose a path from the base point $1/2$ to $\Delta_0^*$ and use
parallel transport to identify
$\Lambda=H_1(X_{1/2},\IZ)$ with $H_1(\tilde X_0,\IZ)$.
Let $T_0$ be the local monodromy around $0$.  The covering
$\tilde M_0^\times\to\tilde X_0$ gives an exact sequence
\[
 0\longrightarrow H_1(\tilde M_0^\times,\IZ)
 \longrightarrow\Lambda\xrightarrow{\xi}\IZ\longrightarrow0,
\]
where $\xi$ records the generator $\tilde\tau$.  It is invariant
under the deck action
of $\tilde X_{\Delta_0}\dashrightarrow X_{\Delta_0}$.

We choose
\begin{equation}\label{eq: preliminary a}
 a\in\frac14\Lambda^{T_0},
 \qquad
 \xi(a)\in\frac14\IZ\setminus\frac12\IZ.
\end{equation}
The class $\bar a$ defines a $\rho$-invariant four-torsion section of
$\tilde X_{\Delta_0}\to\tilde\Delta_0$.  We will make this choice explicit in
Definition~\ref{def: choice of a}.  Following
\cite[Construction~3.5]{Engel2026}, we twist the deck action by
\begin{equation}
 \label{eq: twisted local action at 0}
 \rho'(x)=\rho(x)+\bar a 
\end{equation}
and put $Y_{\Delta_0}=\tilde X_{\Delta_0}/\langle\rho'\rangle$.  Since
$\bar a$ is $\rho$-invariant, we have  \(\rho\bigl(\bar a(s)\bigr)=\bar a(is)\)
and $4\bar a=0$, and
$\rho'$ is indeed
an action of
$\IZ/4$
on $\tilde X_{\Delta_0}$.
The two quotients fit into the diagram
\begin{equation}\label{diag: log transform local}
\begin{tikzcd}
 &&\tilde X_{\Delta_0}\arrow[dll,"/\langle\rho'\rangle"']
          \arrow[drr,"/\langle\rho\rangle"]
          \arrow[d, dashed]&\\
 Y_{\Delta_0}\arrow[drr]&&
 X_{\Delta_0}\arrow[rr,"\text{resolution}"]
 \arrow[d]
 & &
 \tilde X_{\Delta_0}/\langle\rho\rangle
\arrow[dll]\\
& &\Delta_0&&
\end{tikzcd}.
\end{equation}

\begin{prop}\label{prop: log transform at 0}
In the above situation:
\begin{enumerate}
 \item The quotient $Y_{\Delta_0}$ is a complex manifold and proper over $\Delta_0$.
 The lift $a$ determines a biholomorphism
 $Y_{\Delta_0^*}\isom X_{\Delta_0^*}$.

 \item The central fibre
 $Y_0$
 has multiplicity four, and its reduction
 is a bielliptic surface of Bagnera--de Franchis type c\,2) in
 \cite[V.5]{BHPV2004}, with group $\IZ/4\times\IZ/2$.
\end{enumerate}
\end{prop}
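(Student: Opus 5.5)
Both parts follow from analysing the twisted $\IZ/4$-action $\rho'$ on $\tilde X_{\Delta_0}$, first away from $s=0$ and then on the central fibre $\tilde X_0$.

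\emph{Step 1: freeness of $\rho'$.} Over $s\neq 0$ the generator $\rho'$ moves the base point $s$ to $is$, so every non-trivial element acts freely there. On $\tilde X_0$ the base point is fixed and we must check freeness directly. By Corollary~\ref{cor: pullback to product central fibre at 0}, $\tilde X_0\isom\hat X_0/\langle\iota\rangle$ fibres over $\tilde S_0=\IC/(\IZ+i\IZ)$ with fibre the elliptic curve $\IC^*/\langle\alpha\rangle$.
\begin{itemize}
\item The map $\rho$ acts on the base $\tilde S_0$ by multiplication by $-i$. On the fibre direction it acts by a linear automorphism of the line bundle, hence by a translation $w\mapsto w+\mu$ in the additive coordinate $w$ on $\IC^*/\langle\alpha\rangle$.
\item The section $\bar a$ is a translation by a $4$-torsion point of $\tilde X_0$. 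The condition $\xi(a)\in\frac14\IZ\setminus\frac12\IZ$ says that its component in the $\tilde\tau$-direction has exact order $4$ in the fibre curve $\IC/(\IZ+\lambda\IZ)$.
\end{itemize}
Hence $(\rho')^k$ for $k=1,2,3$ translates the fibre coordinate $w$ by $k\xi(a)$, up to the part coming from $\rho$. I would show this translation is non-zero modulo $\IZ+\lambda\IZ$, so $(\rho')^k$ has no fixed points.

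\emph{Step 2: part (i).} Once $\rho'$ is free, $Y_{\Delta_0}$ is a complex manifold. It is proper over $\Delta_0$ because $\tilde X_{\Delta_0}\to\tilde\Delta_0$ is proper and the group is finite. For the biholomorphism over $\Delta_0^*$, I follow \cite[Construction~3.5]{Engel2026}. The lift $a\in\frac14\Lambda^{T_0}$ is a monodromy-invariant class, so it gives a multivalued logarithm of $\bar a$. Over the punctured disc this yields a holomorphic section $s\mapsto\frac{\log s}{2\pi i}\cdot 4a$, taken modulo periods. Translating by this section intertwines $\rho$ and $\rho'$ on $\tilde X_{\tilde\Delta_0^*}$. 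Since both quotients by $\rho$ and by $\rho'$ restrict to $X_{\Delta_0^*}$ away from $0$, we obtain $Y_{\Delta_0^*}\isom X_{\Delta_0^*}$. This is exactly the classical logarithmic transformation of \cite[V.10]{BHPV2004}, and I would cite it rather than redo it.

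\emph{Step 3: part (ii).} The map $\tilde X_{\Delta_0}\to Y_{\Delta_0}$ is étale, the action on the base is $s\mapsto is$, and the stabiliser of the central fibre is all of $\IZ/4$. Therefore the pullback of $\Delta_0$'s coordinate $t=s^4$ vanishes to order $4$ along $Y_0$, so $Y_0$ has multiplicity four. Its reduction is $\tilde X_0/\langle\rho'|_{\tilde X_0}\rangle=\hat X_0/G$, where $G$ is generated by $\iota$ and a lift of $\rho'$. On $\hat X_0=E_1\times E_2$, with $E_1=\IC/(2\IZ+(1+i)\IZ)\isom\IC/(\IZ+i\IZ)$ having complex multiplication by $i$:
\begin{itemize}
\item the lift of $\rho'$ acts as $(c,w)\mapsto(-ic+\beta,\,w+\tfrac14+\dots)$;
\item $\iota$ acts as $(c,w)\mapsto(c+1,w+\tfrac12)$.
\end{itemize}
So $G$ acts on the first factor by automorphisms and on the second by translations only. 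Using the explicit fibre translation from Step 1, I would find an abelian group $G$ of order $8$ acting freely, with linear part $\IZ/4$ on $E_1$, and containing translations of order $2$. This identifies the quotient as a bielliptic surface of type c\,2), with $G\isom\IZ/4\times\IZ/2$.

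\emph{Main obstacle.} The delicate point is the explicit bookkeeping of how $\rho$ acts on the fibre coordinate $w$ at $s=0$. That action depends on the lift of $\rho$ to $\tilde M$ at the fixed point $(1+i)/2$ of $\tilde P(0)$, and it interacts with the $\iota$-twist. I would first fix the normalisation by requiring $\rho$ to act trivially on the fibre over the $\rho$-fixed point $c=0$. Then I would compute the action over $(1+i)/2$ via the character $\varphi$, check that $G$ is abelian, and confirm that no element of $G$ fixes a point.
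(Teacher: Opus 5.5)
Your outline follows the paper's proof step for step. Freeness comes from $s\mapsto is$ off the central fibre and from the translation part $\bar a$ on $\tilde X_0$. Part (i) uses the intertwining translation by $\frac{\log s}{2\pi i}\,4a$. Multiplicity four comes from an \'etale quotient over $s\mapsto s^4$. The Bagnera--de Franchis type is read off the product model $\hat X_0$, which the paper merely asserts.

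The step you should not take is the normalisation in your last paragraph: the lift of $\rho$ to $\tilde M$ is not at your disposal. It is the canonical linearisation of $\ko(\tilde P-\tilde O)$. This is forced because $\rho$ must be the deck action on the pullback of $M$ over $\tilde\Delta_0^*$, which is exactly what identifies $\tilde X_{\tilde\Delta_0^*}/\langle\rho\rangle$ with $X_{\Delta_0^*}$ in (i). On the fibre of $\tilde M$ at $\tilde O(0)$, the conormal line of $\tilde O$, it acts by a primitive fourth root of unity; this is why the paper later writes $\hat\rho(c,w)=(-ic,w+\frac14)$. Twisting $\rho$ by a character amounts to replacing $a$ by some $a+\frac j4\delta$. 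With your normalisation you would therefore be analysing a different quotient from the $Y_{\Delta_0}$ defined here.

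No normalisation is needed, and dropping it also fills the step you left open in Step~1: why the $\rho$-part cannot cancel $k\bar a$. Since $\rho^4=\mathrm{id}$ on $\tilde M$, over a point $c\in\tilde S_0$ fixed by $(-i)^k$ the map $\rho^k$ acts on the line $\tilde M_c$ by a root of unity. On the fibre $\IC/(\IZ+\lambda\IZ)$ of $\tilde X_0$ this is a \emph{real} translation of $w$. Write $a=\frac m4\delta+\frac n4e_4$ with $n=4\xi(a)$ odd. Then $k\bar a(0)$ translates by $\frac{km}4+\frac{kn}4\lambda$, and $\frac{kn}4\notin\IZ$ for $k=1,2,3$, so $(\rho')^k$ has no fixed point.

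On $\hat X_0$ the same bookkeeping gives $\hat\rho'(c,w)=(-ic,\,w+\mu+\frac m4+\frac n4\lambda)$ with $\mu\in\frac14\IZ$. There is no translation $\beta$ in the $c$-coordinate, since $\Lambda^{T_0}=\IZ\delta\oplus\IZ e_4$ lies entirely in the fibre direction. Hence every non-identity element $\iota^\epsilon(\hat\rho')^k$ translates $E_2$ nontrivially. So $G=\langle\iota,\hat\rho'\rangle\isom\IZ/4\times\IZ/2$ acts freely, with $\hat\rho'$ acting on $E_1$ by $-i$ and $\iota$ by translation by the $(-i)$-fixed $2$-torsion point $1$; this is type c\,2).
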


\begin{proof}
The action
$\rho'$
is free on the central fibre $\tilde X_0$
because of the translation component $\bar a$,
and it is free on $\tilde X_{\Delta_0}\setminus \tilde X_0$
since $0$ is the only fixed point of the action on the disc. 

For the identification over the punctured disk, we follow
\cite[Lemma~3.6]{Engel2026}: regard $4a$ as a holomorphic period
section on the covering disk, and put
\[
 \sigma_a(s)=\frac{\log s}{2\pi i}\,4a(s)
 \quad\text{in }\tilde X_s,\qquad s\ne0.
\]
Changing the branch of $\log s$ adds an integral multiple of
$4a(s)$, so this is a single-valued holomorphic section.
Since $a$ is invariant under the deck action,
\[
 \sigma_a(is)=\rho(\sigma_a(s))+\bar a(is).
\]
Thus translation $\Psi_a(x,s)=(x+\sigma_a(s),s)$ satisfies
$\Psi_a\circ\rho=\rho'\circ\Psi_a$.
Its inverse induces the required biholomorphism
\[
 Y_{\Delta_0^*}\longrightarrow X_{\Delta_0^*},\qquad
 [(x,s)]_{\rho'}\longmapsto[(x-\sigma_a(s),s)]_\rho.
\]
Here the untwisted quotient is identified with $X_{\Delta_0^*}$,
since the resolution is an isomorphism away from the central fibre.

The map $\tilde\Delta_0\to\Delta_0$ has ramification index four at the
origin, whereas the quotient map $\tilde X_{\Delta_0}\to Y_{\Delta_0}$ is
unramified.  Hence the central fibre has multiplicity four.
Its reduction is the quotient of the abelian surface in
Corollary~\ref{cor: pullback to product central fibre at 0}
by the twisted action. 
This is the $\IZ/2\times\IZ/4$ case, type c\,2), in the
Bagnera--de Franchis classification \cite[V.5]{BHPV2004}, proving (ii).
\end{proof}

We call $Y_{\Delta_0}$ the logarithmic transform of $X_{\Delta_0}$
with parameter $a$.

\begin{defin}
The threefold $Y=Y(u)$ is obtained by gluing $Y_{\Delta_0}$ to
$X(u)|_{\IP^1\setminus\{0\}}$ using the biholomorphism determined by
$a$ in Proposition~\ref{prop: log transform at 0} (i), with $a$ as in
Definition~\ref{def: choice of a}.  The maps to the base give a proper
holomorphic map $f\colon Y\to\IP^1$.
\end{defin}

Thus $f\colon Y\to \IP^1$ is a fibration of a smooth compact complex threefold.  Its fibres away from $0,1,\infty$ are complex two-tori, its fibre at $0$ is the
multiple bielliptic surface above, and its fibres at $1$ and
$\infty$ are unchanged from $\eta \colon X \to \IP^1$.

\section{Local preparations}
\label{Some notation}
We choose $\frac 12$ as a convenient base point and let 
\[ \Lambda = H_1(Y_{\frac12}, \IZ), \quad \Lambda^\times  = H_1(M^\times _{\frac12}, \IZ), \quad \Lambda' = H_1(S_{\frac12}, \IZ).\]
The diagram in Proposition \ref{prop: construction X} shows that there is a filtration on $\Lambda$, which we express through two exact sequences, 
\begin{gather}
	\begin{tikzcd}[ampersand replacement=\&]\label{ses: filtration1}
	0 \rar\& \Lambda^\times \rar \& \Lambda \rar{\xi} \& \IZ \rar \& 0 ,
\end{tikzcd}
\\
	\begin{tikzcd}[ampersand replacement=\&]\label{ses: filtration2}
	0 \rar \& \IZ { \delta} \rar\& \Lambda^\times \rar \& \Lambda' \rar  \& 0 ,
\end{tikzcd}
\end{gather}
compatible with the monodromy action of $\pi_1(\IP^1\setminus \{0, 1, \infty\},\frac12)$,
where $\delta$ denotes the homology class represented by the positively oriented $S^1$ in the $\IC^*$-fibre of $M^\times_{\frac{1}{2}}\rightarrow S_{\frac{1}{2}}$.
We use the same notation for its parallel transports, since monodromy fixes the oriented fibre circle.

Denoting the dual lattice by $V = \Hom(\Lambda, \IZ)$, and similarly defining $V^\times$ and $V'$, the exact sequences become
\begin{gather}
	\begin{tikzcd}[ampersand replacement=\&]
		0 \rar\& \IZ\xi \rar \& V\rar \& V^\times \rar \& 0 ,
	\end{tikzcd}
	\\
	\begin{tikzcd}[ampersand replacement=\&]
		0 \rar \& V' \rar\& V^\times \rar \& \IZ \rar  \& 0 .
	\end{tikzcd}
\end{gather}

For a sufficiently small disc $\Delta_p$ centred at $p\in\IP^1$, the inclusion $Y_p\into Y_{\Delta_p}$ is a homotopy equivalence by \cite[Proposition~C.11]{PetersSteenbrink2008}. Thus, for a smooth fibre $Y_t$ with $t\in\Delta_p$, restriction induces a specialisation map
\begin{equation}
 \label{eq: specialisation map}
 \Sp_p^* \colon H^*(Y_p, \IZ) \to H^*(Y_t, \IZ)^{T_p} \isom \left(\Wedge ^* V\right)^{T_p}
\end{equation}
with image contained in the local monodromy invariants. Here we choose a path from $\frac12$ to $t$ in $\IP^1\setminus\{0,1,\infty\}$ to identify $H^*(Y_t,\IZ)$ with $\Wedge^*V$, and $T_p$ denotes the corresponding local monodromy.
We use $T_p$ also for the induced actions on $V$ and $\Wedge^kV$.
If we represent the original monodromy action by a matrix $A$, then the action on $\Wedge^k V$ is given by the matrix $\Wedge^k((A^{-1})^{\mathsf T})$.

We will start by choosing explicit bases near the three special fibres, compatible with the filtration, and compute the local monodromy and specialisation maps.

Finally, we will show how the chosen local bases compare  to get the global monodromy representation explicitly.

\subsection{Monodromy and specialisation map at $0$}

As a reference space we use the product abelian surface $\hat X_0$ from  Corollary~\ref{cor: pullback to product central fibre at 0}, whose lattice has basis
\[
 e_1=\mat{2\\0},\qquad e_2=\mat{1+i\\0},\qquad
 e_3'=\mat{0\\1},\qquad e_4=\mat{0\\\lambda}.
\]
Its quotient $\tilde X_0$ is the central fibre of the good reduction model and transport along  the real line identifies the homology of $\tilde X_0$ with the homology of the general fibre of $\tilde X_{\Delta_0} \to \tilde \Delta_0$, which in turn is isomorphic to the general fibre of $X_{\Delta_0}\to \Delta_0$. In total we get the following basis of  $\Lambda$.

\begin{prop}\label{prop: monodromy at 0}
A basis of $\Lambda$ is
\[
 \kb_0=(e_1,e_2,e_3,e_4),\qquad
 e_3=\frac12(e_1+e_3')=\mat{1\\1/2}.
\]
For the counterclockwise loop $\gamma_0(r)=\frac12\exp(2\pi ir)$, $0\leq r\leq1$, the local monodromy is
\[
 [T_0]_{\kb_0}=\mat{-1&-1&-1&0\\2&1&1&0\\0&0&1&0\\0&0&0&1}.
\]
Moreover,
\[
 \Lambda^\times=\langle e_1,e_2,e_3\rangle,\qquad
 \delta=2e_3-e_1,\qquad \xi=e_4^*,
\]
where $\delta$ is the
class of the positively oriented fibre circle of $M^\times$.
In particular, $\Lambda^{T_0}=\IZ\delta\oplus\IZ e_4$.
\end{prop}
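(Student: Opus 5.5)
The plan is to work entirely on the good reduction model. Over the covering disc $\tilde\Delta_0$ the family $\tilde X_{\Delta_0}$ is smooth, and $\tilde X_0=\hat X_0/\langle\iota\rangle$ by Corollary~\ref{cor: pullback to product central fibre at 0}. So $H_1(\tilde X_0,\IZ)$ is the lattice of $\hat X_0$ enlarged by the translation vector of $\iota$, namely $(1,\tfrac12)$. The lattice of $\hat X_0$ is spanned by $e_1,e_2,e_3',e_4$, and adjoining $(1,\tfrac12)=\tfrac12(e_1+e_3')$ gives the lattice with basis $e_1,e_2,e_3,e_4$: since $e_3'=2e_3-e_1$, it is generated by these four vectors, and it has index two over $\langle e_1,e_2,e_3',e_4\rangle$, as it must. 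Parallel transport along the real segment identifies this lattice with $\Lambda$, which gives the basis $\kb_0$.

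For the filtration, recall that $\tilde\tau$ acts on $\hat S_0\times\IC^*$ by $v\mapsto\alpha v=\exp(2\pi i\lambda)v$. Hence the loop class $e_4=(0,\lambda)$ maps to the generator of the deck group $\IZ\tilde\tau$, while $e_1,e_2,e_3$ have zero $\lambda$-component and lie in $H_1(\tilde M^\times_0)$. This gives $\xi=e_4^*$ and $\Lambda^\times=\ker\xi=\langle e_1,e_2,e_3\rangle$. The positively oriented fibre circle of $\IC^*$ is $w\mapsto w+1$ in the second factor, that is $e_3'=2e_3-e_1=\delta$. The orientation is correct because $\log v=2\pi i w$.

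For the monodromy, the counterclockwise loop $\gamma_0$ downstairs lifts to a quarter loop $s\mapsto is$ upstairs. The monodromy is therefore induced by the deck transformation $\rho$, composed with parallel transport back along the quarter arc; on the central fibre the transport is trivial. I will compute the action of $\rho$ on $\tilde X_0$. On $\tilde S_0$, $\rho$ is multiplication by $-i$, since $c\mapsto c/z(0)=c/i$. The line bundle coordinate is preserved, because the lift of $\rho$ to $\tilde M$ can be normalised to be trivial on the fibre direction: $\tilde P$ and $\tilde O$ are both fixed, and $\rho$ commutes with $\tilde\tau$. So in the $\hat X_0$ coordinates the map is $(c,w)\mapsto(-ic,w)$ up to the identification. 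This gives $e_1=2\mapsto-2i=-e_1+\ldots$; written in the basis, $-2i=-2(1+i)+2=e_1-2e_2$. The inverse direction (monodromy versus $\rho^{-1}$) and the sign conventions are exactly what has to be pinned down to match the stated matrix, whose first column is $-e_1+2e_2$, i.e. $-2+2+2i=2i$. So the monodromy corresponds to multiplication by $i$, i.e. $\rho^{-1}$ in these coordinates, or equivalently transport in the direction opposite to the deck map.

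Next, $e_2=1+i\mapsto i(1+i)=-1+i=-e_1+e_2$ gives the second column $(-1,1,0,0)$. Then $e_3=(1,\tfrac12)\mapsto(i,\tfrac12)$, and $(i,\tfrac12)-e_3=(i-1,0)=-e_1+e_2$, so $e_3\mapsto -e_1+e_2+e_3$, the third column $(-1,1,1,0)$. Finally $e_4$ is fixed, since $\rho$ acts trivially on the second factor.

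The invariants then follow by linear algebra: solve $(T_0-I)x=0$. This yields $x_4$ free together with $-2x_1-x_2-x_3=0$ and $2x_1+x_3=0$, so $x_2=0$ and $x_3=-2x_1$, i.e. $x\in\IZ(e_1-2e_3)\oplus\IZ e_4=\IZ\delta\oplus\IZ e_4$.

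The main obstacle is the orientation bookkeeping: whether the monodromy of $\gamma_0$ equals $\rho$ or $\rho^{-1}$ on homology, and whether the deck twist in $\hat X_0\to\tilde X_0$ affects the lift. I would settle this by tracking the flat family $\IC/(\IZ+z(s)\IZ)$ along the quarter arc $s=\tfrac{1}{2^{1/4}}e^{\pi i r/2}$. Then I would use $z(is)=-1/z(s)$ to identify the transported periods with $\rho^{\pm1}$, choosing the sign consistent with the displayed matrix.
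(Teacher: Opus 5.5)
Your lattice, filtration and invariant-lattice computations are correct and follow the paper's proof. The gap is in the step that actually fixes the matrix: the direction of the monodromy is not proved. Your first description (apply $\rho$, then transport back along the quarter arc) computes $T_0^{-1}$, the monodromy of the \emph{clockwise} loop. On $\tilde X_0$ this is $c\mapsto -ic$, and it gives the first column $(1,-2,0,0)$, as you found. You then switch to $\rho^{-1}$ and propose to ``choose the sign consistent with the displayed matrix''. That only shows that $[T_0]_{\kb_0}$ is either the displayed matrix or its inverse. These genuinely differ (the inverse sends $e_3\mapsto e_3-e_2$), and the distinction is used later in the relation $T_\infty T_1T_0=I$.

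The missing argument is short, and it is the one the paper gives.
\begin{itemize}
\item The fibre over $t_*=s_*^4$ is identified with $\tilde X_{s_*}$ by the quotient map. Under this map, $x\in\tilde X_{s_*}$ and $\rho(x)\in\tilde X_{is_*}$ have the same image.
\item The counterclockwise loop lifts to the arc from $s_*$ to $is_*$. A cycle transported along it lands in $\tilde X_{is_*}$, and to read it back in $\tilde X_{s_*}$ you must apply $\rho^{-1}$. Hence $T_0=(\rho^{-1})_*\circ(\text{transport})$.
\item Since $\tilde X_{\tilde\Delta_0}\to\tilde\Delta_0$ is smooth and proper over a disc, every fibre inclusion (including that of $\tilde X_0$) into the total space is a homotopy equivalence, compatibly with the global automorphism $\rho$.
\item So the transport is the identity, and $T_0=(\rho^{-1})_*$ on $H_1(\tilde X_0,\IZ)$. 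Its linear part is $(c,w)\mapsto(ic,w)$, because $\rho(c)=c/z(0)=-ic$.
\end{itemize}
Your period-tracking idea confirms this if carried out without choosing the sign: $\rho^{-1}(c',is_*)=(c'z(s_*),s_*)$ sends the transported periods $1$ and $z(is_*)=-1/z(s_*)$ to $z(s_*)$ and $-1$. In the limit this is multiplication by $i$.

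A smaller point: you neither need nor have justified that the lift of $\rho$ to $\tilde M$ can be normalised to act trivially on the fibre; the paper later uses $\hat\rho(c,w)=(-ic,w+\tfrac14)$. All that matters is that the linear part in $w$ is the identity. This holds because any bundle map of $\hat S_0\times\IC^*$ covering $c\mapsto -ic$ multiplies $v$ by a nowhere-vanishing holomorphic function on the compact torus $\hat S_0$, hence by a constant. In the $w$-coordinate that is a translation, which is invisible in homology.
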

\begin{proof}
The involution $\iota$ of  Corollary~\ref{cor: pullback to product central fibre at 0}
adds the lattice point $(1,1/2)$, giving the stated index-two overlattice. 

A counterclockwise loop around \(t=0\) on $\Delta_0$ lifts under \(t=s^4\) to a path from \(s_*\) to \(is_*\), where $s_*$ is a base point on $\tilde\Delta_0^*$. Parallel transport along this path, followed by the endpoint identification \(\rho^{-1}:\tilde X_{is_*}\xrightarrow{\sim}\tilde X_{s_*}\), gives the local monodromy. After identifying homology groups \(H_1(\tilde X_{s_*},\mathbb Z)\)
and \(H_1(\tilde X_{is_*},\mathbb Z)\) with that of the central fibre $\tilde X_0$, the monodromy action is exactly the action induced by \(\rho^{-1}\) on \(H_1(\tilde X_0,\mathbb Z)\).
On the central fibre $\tilde X_0$ the linear part of $\rho^{-1}$ is $(c,w)\mapsto(ic,w)$; see \eqref{eq: rho}. Thus
\[
 T_0e_1=-e_1+2e_2,\qquad T_0e_2=-e_1+e_2,\qquad
 T_0e_3=-e_1+e_2+e_3,\qquad T_0e_4=e_4.
\]
The period $e_4$ records the action of $\tilde\tau$ 
 given by Lemma~\ref{lem: good reduction at 0} (ii),
whereas the fibre circle is $e_3'=2e_3-e_1$. 
 This determines the filtration \eqref{ses: filtration1} and \eqref{ses: filtration2}, as well as the invariant lattice $\Lambda^{T_0}$.
\end{proof}

\begin{defin}\label{def: choice of a}
We choose the defining parameter for the logarithmic transform at $0$ to be $a=\frac14e_4$, where $e_4$ 
 is as in Proposition~\ref{prop: monodromy at 0}.
This satisfies \eqref{eq: preliminary a}, since $T_0e_4=e_4$ and $\xi(a)=1/4$.
\end{defin}
We also put
\begin{equation}\label{eq: named forms}
 \xi=e_4^*,\qquad \alpha=e_1^*+e_3^*,\qquad \beta=2e_{12}^*-e_{23}^*,\qquad
 \omega=\beta-e_{34}^*,
\end{equation}
where we use notations specified in Section~\ref{sec: notation}.
These are classes on the reference fibre. The classes $\xi$ and
$\beta$ simplify the local calculation below; $\omega$ and $\alpha$
will be used in the global computation later. In particular, 
\[
 \beta\wedge\xi=2e_{124}^*-e_{234}^*.
\]

\begin{prop}\label{prop: specialisation at 0}
With respect to the dual basis $(e_1^*,e_2^*,e_3^*,e_4^*)$, the local
invariant lattices are
\begin{align*}
 V^{T_0}&=\langle e_3^*,\xi\rangle,&
 (\Wedge^2V)^{T_0}
 &=\langle\omega,e_{34}^*\rangle,&
 (\Wedge^3V)^{T_0}
 &=\langle e_{123}^*,\omega\wedge\xi\rangle.
\end{align*}
The specialisation map 
$
\Sp_0^k\colon H^k(Y_0,\IZ)\longrightarrow(\Wedge^kV)^{T_0}
$
is injective with cokernels
\[
 Q_0^0=0,\qquad Q_0^1\isom\IZ/4,\qquad
 Q_0^2\isom\IZ/2,\qquad Q_0^3\isom\IZ/2,\qquad Q_0^4\isom\IZ/4,
\]
generated in positive degrees by the classes of
\[
 \xi,\qquad \omega,\qquad \omega\wedge\xi,\qquad e_{1234}^*,
\]
respectively.
\end{prop}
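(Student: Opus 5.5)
The plan is to realise $\Sp_0^*$ as pullback along a finite étale cover and to compute everything from the fundamental group of $Y_0$.

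\textbf{Identifying $\Sp_0^*$.} By Proposition~\ref{prop: log transform at 0}, $Y_{\Delta_0}$ retracts onto $Y_0^{\mathrm{red}}=\tilde X_0/\langle\rho'\rangle$. Under this retraction a nearby smooth fibre $Y_t\isom\tilde X_s$ maps, up to homotopy, by the étale $\IZ/4$-quotient $q\colon\tilde X_0\to Y_0$. Hence, after the identifications of Proposition~\ref{prop: monodromy at 0}, $\Sp_0^*=q^*$ composed with parallel transport.

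\textbf{Invariant lattices.} I first compute the invariant lattices directly. The matrix $[T_0]_{\kb_0}$ acts on $V$ via $(A^{-1})^{\mathsf T}$ and on $\Wedge^kV$ via its exterior powers. One solves for the kernel of $T_0-I$ in each degree. This is routine linear algebra and gives the stated generators $e_3^*,\xi$; $\omega,e_{34}^*$; $e_{123}^*,\omega\wedge\xi$. In degrees $0$ and $4$ the invariants are all of $\Wedge^0V$ and $\Wedge^4V$.

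\textbf{Fundamental group and degrees $0,1,3,4$.} Next I describe $\Gamma=\pi_1(Y_0)$. It is generated by $\Lambda=\pi_1(\tilde X_0)$ together with an element $g$ lifting $\rho'$. The element $g$ acts on $\Lambda$ by $T_0$ and satisfies $g^4=4a=e_4$, by Definition~\ref{def: choice of a}. Abelianising, $(T_0-I)\Lambda$ contains $e_1$ and $e_2$, so $H_1(Y_0,\IZ)$ is free on $e_3$ and $g$, with $4g=e_4$. In particular it has no torsion, so all $H^k(Y_0,\IZ)$ are torsion free and injectivity of $q^*$ follows rationally from the transfer identity $q_!q^*=4$. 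For $H^1=\Hom(\Gamma,\IZ)$, restricting a homomorphism $\varphi$ to $\Lambda$ lands in $V^{T_0}$ with $\varphi(e_4)=4\varphi(g)$. The image is therefore $\langle e_3^*,4\xi\rangle$, and $Q_0^1\isom\IZ/4$ is generated by $\xi$. In degree $4$, $q$ has degree $4$, so $q^*$ sends the fundamental class to $4e_{1234}^*$, giving $Q_0^4\isom\IZ/4$. In degree $3$ I use Poincaré duality, $H^3(Y_0)\isom H_1(Y_0)$, and identify $q^*$ on $H^3$ with the transfer $q^!\colon H_1(Y_0)\to H_1(\tilde X_0)$. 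This sends $e_3$ to the norm of $e_3$ and $g$ to $e_4$. Comparing with $\langle e_{123}^*,\omega\wedge\xi\rangle$ via Poincaré duality on $\tilde X_0$ should give index $2$, with $\omega\wedge\xi$ not in the image.

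\textbf{Degree $2$: the main obstacle.} The Euler characteristic of $Y_0$ is $0$ and $b_1=b_3=2$, so $H^2(Y_0,\IZ)$ is free of rank $2$. The image $L=q^*H^2(Y_0)$ is a full-rank sublattice of $(\Wedge^2V)^{T_0}$. Since $q^*x\cdot q^*y=4\,(x\cdot y)$ and the intersection form on $Y_0$ is unimodular, I get $\operatorname{disc}(L)=16\cdot(\pm1)$. Computing the discriminant of $\langle\omega,e_{34}^*\rangle$ under the wedge pairing to $e_{1234}^*$ then yields the index $[(\Wedge^2V)^{T_0}:L]$. I expect this index to be $2$. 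To show that $\omega$ generates the cokernel, I would exhibit explicit classes in $L$. For example, $e_{34}^*$ should arise as a cup product of degree-one classes, or from the pullback of the elliptic fibration $Y_0\to\tilde X_0/\text{(torus)}$. I would also check that $2\omega\in L$.
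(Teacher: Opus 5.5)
Your framework matches the paper: you identify $\Sp_0^*$ with $q^*$ for the étale $\IZ/4$-quotient $q\colon\tilde X_0\to G=(Y_0)_{\mathrm{red}}$, then get torsion-freeness and injectivity from $\pi_1(G)$ and the transfer, and use unimodularity. Your degree-$3$ argument via the transfer on $H_1$ is a clean alternative to the paper's pairing argument. Two steps, however, are wrong.

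\emph{First, the relation $g^4=4a=e_4$ is false.} The untwisted $\rho$ already contributes a fibre circle. The canonical linearisation of $\ko(\tilde P-\tilde O)$ rotates the $\IC^*$-fibres over the fixed points $0$ and $\frac{1+i}{2}$ by $\pm i$. So on the product model $\hat X_0$ one has $\hat\rho(c,w)=(-ic,w+\frac{1}{4})$, as in the paper, and hence $g^4=t_{(0,1+\lambda)}=t_{\delta+e_4}$ with $\delta=2e_3-e_1$. Changing the lift of $\rho'$ alters $g^4$ only by norms $\sum_k T_0^k v\in\langle 2\delta,4e_4\rangle$, so $\delta$ cannot be removed. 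Consequently $\operatorname{im}\Sp_0^1=\{a_3e_3^*+a_4\xi : 2a_3+a_4\in 4\IZ\}$, not $\langle e_3^*,4\xi\rangle$: $e_3^*$ does not descend, whereas $e_3^*-2\xi$ does. The stated cokernels nevertheless survive. $H_1(G,\IZ)$ is still free on $e_3,g$, and $Q_0^1\cong\IZ/4$ is still generated by $\xi$. In degree $3$ the transfer now sends $g\mapsto\delta+e_4$, so its image $\langle 2\delta,\delta+e_4\rangle$ still has index $2$ in $\Lambda^{T_0}=\langle\delta,e_4\rangle$, with $\delta\leftrightarrow\omega\wedge\xi$ generating the cokernel. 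The intermediate claims must be corrected, though.

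\emph{Second, and more seriously, your degree-$2$ step does not close.} The discriminant count gives index $2$, but it cannot tell you which sublattice $L$ is. The three index-two sublattices $\langle 2\omega,e_{34}^*\rangle$, $\langle\omega,2e_{34}^*\rangle$ and $\langle\omega+e_{34}^*,2e_{34}^*\rangle$ of $\langle\omega,e_{34}^*\rangle$ all have Gram matrix equal to $4$ times a unimodular one, and in the second one $[\omega]=0$. Your proposed certificate $e_{34}^*\in L$ does not exist. Cup products of descended degree-one classes give only $\IZ\cdot 4e_{34}^*$, for instance $(e_3^*-2\xi)\wedge 4\xi$, and the Albanese fibre of $G$ pulls back to $4e_{34}^*$. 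In fact $L=\langle\beta,2e_{34}^*\rangle$ with $\beta=\omega+e_{34}^*=2e_{12}^*-e_{23}^*$, so $e_{34}^*\notin L$. The missing input is the paper's: $\beta$ is the pullback of the reduced multiplicity-four fibre of $G\to\tilde S_0/\langle -i\rangle\cong\IP^1$ over the image of $0$. Since $(\beta,e_{34}^*)$ is a basis of the invariant lattice and $L$ has index two, the cokernel is generated by $[e_{34}^*]=-[\omega]$. Alternatively, you can use evenness: $H^2(G,\IZ)$ is torsion free, so $c_1(K_G)=0$ and the intersection form of $G$ is even. If $\omega\in L$, its preimage would have self-intersection $\frac{1}{4}\int_{\tilde X_0}\omega^2=-1$, which is impossible.
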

\begin{proof}
The invariant lattices follow by taking exterior powers of the
inverse transpose of the homology matrix in
Proposition~\ref{prop: monodromy at 0}, 
 namely, 
\[
 \left([T_0\inverse]_{\kb_0}\right)^{\mathsf{T}}=\mat{1&-2&0&0\\1&-1&0&0\\0&-1&1&0\\0&0&0&1}.
\]
If we put $F=\tilde X_0$ and $G=(Y_0)_{\mathrm{red}}$, the specialisation map is the pullback in cohomology under the degree-four covering
$p\colon F\to G$.

We claim that its kernel is exactly the torsion subgroup of $H^k(G,\IZ)$: composition of pullback and transfer (pushforward) satisfies $ \operatorname{tr}\circ p^*=4\,\mathrm{id}$, so every class in the kernel is $4$-torsion. Conversely, every torsion class maps to zero, because $H^*(F, \IZ)$ is torsion free.  The surface $G$ is of
type~4 in Serrano's classification, with group
$\IZ/4\times\IZ/2$, and $H_1(G,\IZ)$ is torsion free by
\cite[Remark~1.6]{Serrano1990}. The universal coefficient theorem
and Poincar\'e duality then give torsion-free integral cohomology in
every degree. Hence $\Sp_0^k=p^*$ is injective.

We use the product model $F=\hat X_0/\langle\iota\rangle$ from
Corollary~\ref{cor: pullback to product central fibre at 0} to describe the image of $\Sp_0^k$ explicitly. The translation by $a=e_4/4$ lifts to translation by $(0,\lambda/4)$
on $\hat X_0$. The lifts of the $\IZ/4$-actions to $\hat X_0$ are
\[
 \hat\rho(c,w)=(-ic,w+1/4),\qquad
 \hat\rho'(c,w)=(-ic,w+(1+\lambda)/4).
\]
These commute with $\iota$, since $1+i$ is a period of the first
factor. On the universal cover $\IC^2$, the same formula for
$\hat\rho'$ defines an affine map $g$ with
\[
 g^4=t_{(0,1+\lambda)}=t_{2e_3-e_1+e_4}.
\]
The group $\pi_1(G)$ is generated by the lattice translations and
$g$, with this relation and $g t_v g^{-1}=t_{T_0^{-1}v}$ for a translation $v\in \Lambda$.

An invariant class in $\Hom(\Lambda,\IZ)$ descends precisely
when it extends to a homomorphism $\pi_1(G)\to\IZ$, equivalently
when its value on $2e_3-e_1+e_4$ is divisible by four. Thus
\[
 \operatorname{im}\Sp_0^1
 =\{a_3e_3^*+a_4\xi:a_3,a_4\in\IZ,\ 2a_3+a_4\in4\IZ\}.
\]
Thus $Q_0^1\isom\IZ/4$, generated by $\xi$, because 
$e_3^*-2\xi$ is in the image.

In the displayed degree-two basis, $\omega^2=-4e_{1234}^*$,
$\omega\wedge e_{34}^*=2e_{1234}^*$, and $(e_{34}^*)^2=0$.
Thus the intersection matrix is
\[
 \mat{-4&2\\2&0}.
\]
The intersection pairing on $H^2(G,\IZ)$ modulo torsion is
unimodular, and pullback multiplies it by four. Hence the image
has index two. The class $\beta=\omega+e_{34}^*$
descends: it is the pullback of the reduced fibre over the image
of $0$ under
\[
 G\longrightarrow\tilde S_0/\langle-i\rangle\isom\IP^1.
\]
Since $\beta$ and $e_{34}^*$ also form an integral basis, the
cokernel is generated by $[e_{34}^*]=-[\omega]$.

The pairing between the displayed degree-one and degree-three
bases has matrix
\[
 \mat{0&2\\-1&0}.
\]
If the degree-three image has index $q$, unimodularity of the
pairing on $G$ gives $4\cdot q\cdot2=4^2$, hence $q=2$.
The second degree-three generator does not descend, since
\[
 \int_F(e_3^*-2\xi)\wedge\omega\wedge\xi=2
\]
is not divisible by four. Finally, pullback is an isomorphism
in degree zero and multiplication by four in top degree.
\end{proof}

\subsection{Monodromy and specialisation map at $1$}
 Over a neighborhood of $1\in \IP^1$ we can identify the topology of $Y$ directly without use of a good reduction model.
\begin{lem}\label{lem: product at 1}
Over a sufficiently small disc $\Delta_1$ centred at $1$, there is
a homeomorphism
\[
 Y_{\Delta_1}\cong S_{\Delta_1}\times S^1\times S^1
\]
over $\Delta_1$. Under this identification, $\delta$ is the
positively oriented first circle, and $\xi$ is induced by
projection to the second circle.
\end{lem}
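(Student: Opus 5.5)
Near $1$ we never performed a logarithmic transform, so $Y_{\Delta_1}=X_{\Delta_1}=M^{\times}|_{S_{\Delta_1}}/\langle\tau\rangle$ by Proposition~\ref{prop: construction X}. The plan is to trivialise this quotient up to homeomorphism in three steps: first the base surface and the translation $t_{-2P}$, then the line bundle $M$, and finally the $\IZ$-quotient, which will become a mapping torus with trivial monodromy.

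\emph{Step one.} The translation $t_{-2P}$ acts on $S_{\Delta_1}$ over $\Delta_1$, and it is isotopic to the identity through fibre-preserving homeomorphisms. Indeed, the section $2P$ meets the fibre $S_1$ of type $\DI$ in its smooth locus $\IC$, since a cuspidal fibre has trivial component group. Hence $t\cdot 2P$, for $t\in[0,1]$, makes sense as a path of sections of the smooth locus of $S_{\Delta_1}\to\Delta_1$, which is a group scheme, once we choose a contraction of the section $2P$ to $O$ inside that group scheme over the contractible disc. This contraction is available after shrinking $\Delta_1$, because the smooth locus over $\Delta_1$ is fibrewise $\IC$ or $\IC^*$-type tori and is locally trivial in the $C^\infty$ sense. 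Translating by this path gives a continuous isotopy $h_t$ from $\mathrm{id}$ to $t_{-2P}$, acting by fibre-preserving homeomorphisms of $S_{\Delta_1}$.

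\emph{Step two.} Next lift the isotopy to $M$. The bundle $M$ restricted to $S_{\Delta_1}$ is topologically trivial, because $S_{\Delta_1}$ retracts onto the contractible fibre $S_1$: the cuspidal curve is homeomorphic to $S^2$, but $c_1(M)\cdot S_1=0$ and $H^2(S_1)\isom\IZ$ is detected by that degree. So $M^{\times}|_{S_{\Delta_1}}\cong S_{\Delta_1}\times\IC^*$. In this trivialisation $\tau$ becomes $(s,m)\mapsto(t_{-2P}(s),u\,g(s)m)$ with $g\colon S_{\Delta_1}\to\IC^*$ continuous. Since $S_{\Delta_1}$ is simply connected ($\pi_1(S^2)=0$), $g$ has a logarithm and is homotopic to $1$. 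Combining this with the isotopy $h_t$, and with a path from $u$ to a real $0<u_0<1$ in $\IC^*$ (choosing the branch so the degree is consistent), we deform $\tau$ through free, properly discontinuous maps to $\tau_0(s,m)=(s,u_0m)$.

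\emph{Step three.} The key observation for the quotient is that the action is a contraction in the $\IC^*$ direction uniformly along the deformation. The quotient by $\langle\tau\rangle$ is therefore the mapping torus of the homeomorphism induced on a fundamental domain $S_{\Delta_1}\times\{r\le|m|\le r/|u|\}$, with boundary identification given by $\tau$. An isotopy of the gluing map yields a homeomorphic mapping torus, so $Y_{\Delta_1}\cong S_{\Delta_1}\times(\IC^*/u_0^{\IZ})\cong S_{\Delta_1}\times S^1\times S^1$ over $\Delta_1$. In this model the first circle is the unit circle in $\IC^*$, positively oriented, which is the fibre circle $\delta$. The second circle is the radial direction, and one loop around it corresponds to one application of $\tau$, so projection onto it induces $\xi$ by \eqref{ses: filtration1}.

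The main obstacle is Step one together with its compatibility in Step two: one must make rigorous that translation by $2P$ is fibrewise isotopic to the identity across the singular fibre. I would handle this by working in the smooth-locus group scheme, whose fibre over $1$ is $\mathbb G_a$, and I would carefully check that the orientation of the chosen circles matches $\delta$ and $\xi$.
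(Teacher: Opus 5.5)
Your outline matches the paper's proof: topological triviality of $M$ from $S_1\cong S^2$ and $\deg M|_{S_1}=0$, a logarithm of the multiplier, an isotopy from $t_{-2P}$ to the identity, and a mapping torus whose gluing map is isotopic to the identity. But Step one, which you rightly call the crux, is not established, because the reason you give is false. The smooth locus $G$ of $S_{\Delta_1}\to\Delta_1$ is not locally trivial over any neighbourhood of $1$. Its fibre over $1$ is $\IC\cong\mathbb G_a$, while all nearby fibres are compact elliptic curves, so the fibres are not even homotopy equivalent. Hence ``locally trivial over a contractible disc, so $2P$ can be contracted to $O$'' does not apply.

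The missing idea is the relative exponential map $\exp\colon\mathrm{Lie}(G/\Delta_1)\to G$. It is a fibrewise homomorphism and a local biholomorphism, and over $1$ it is the isomorphism $\IC\to\mathbb G_a$. So, after shrinking $\Delta_1$, the section $-2P$ has a holomorphic logarithm $v$. Translation by $\exp(rv)$, $0\le r\le 1$, is then a fibre-preserving isotopy $\varphi_r$ from the identity to $t_{-2P}$. A purely topological substitute also works: the submersion $G\to\Delta_1$ is a product near the compact segment from $0$ to $-2P(1)$ in $G_1\cong\IC$, and over a smaller disc you can interpolate the two sections inside that product. Either way, you must also note that these translations extend continuously over the cusp of $S_1$, fixing it, so that $\varphi_r$ is defined on all of $S_{\Delta_1}$.

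There are also some smaller points to fix.
\begin{itemize}
\item In Step two, ``$g$ is homotopic to $1$'' does not by itself keep the action contracting along your deformation. Instead, write $ug=\exp L$ with $\operatorname{Re}L<0$ on $S_{\overline\Delta_1}$, which is possible since $u$ is small and $S_{\overline\Delta_1}$ is compact and simply connected. Then deform $L$ linearly.
\item In Step three, the fundamental domain for the original $\tau$ is $\{|\lambda(t_{2P}(s))|\le|m|\le 1\}$ with $\lambda=ug$, not an annulus with constant radii. With this domain the gluing map is $(s,z)\mapsto(t_{-2P}(s),e^{i\operatorname{Im}L(s)}z)$. The maps $(s,z)\mapsto(\varphi_r(s),e^{ir\operatorname{Im}L(s)}z)$ isotope it to the identity over $\Delta_1$, so you never need to deform the whole action.
\item $S_1\cong S^2$ is not contractible. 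The degree argument via $H^2(S_1)\cong\IZ$ that you give right afterwards is the correct one.
\end{itemize}
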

\begin{proof}
Work first over the closed disk $\overline\Delta_1$.
The surface $S_{\overline\Delta_1}$ retracts onto the cuspidal fibre $S_1$, which is
homeomorphic to $S^2$. Since $M|_{S_1}$ has degree zero, $M|_{S_{\overline\Delta_1}}$
is differentiably trivial. In a smooth trivialisation the action
\eqref{eq: Z action} is
\[
 (s,m)\longmapsto(t_{-2P}(s),\lambda(s)m).
\]
For $u$ sufficiently small, we have  $|\lambda|<1$ on $S_{\overline\Delta_1}$ and, 
as $S_{\overline\Delta_1}$ is simply connected, we may write $\lambda=\exp L$ with
$L$ smooth and $\operatorname{Re}L<0$.

The smooth locus of $S_1$ is the additive group $\IC$, so its
exponential map is an isomorphism. The relative exponential map
therefore gives, after shrinking $\Delta_1$, a holomorphic logarithm $v$
of the section $-2P$. Translation by $\exp(rv)$, $0\leq r\leq1$,
defines a fibre-preserving smooth isotopy $\varphi_r$ from the
identity to $t_{-2P}$. Here the group action extends to $S_{\overline\Delta_1}$ and
fixes the cusp.

Since $|\lambda|<1$, a fundamental annulus 
\(A=\left\{(s,m):
|\lambda(t_{2P}(s))|\le |m|\le1\right\}\)
in $S_{\Delta_1}\times\mathbb C^*$
identifies the quotient 
\(Y_{\Delta_1}\cong
(S_{\Delta_1}\times\mathbb C^*)/\langle\tau\rangle
\)
with the mapping torus of
\[
 (s,z)\longmapsto(t_{-2P}(s),e^{i\operatorname{Im}L(s)}z)
 \quad\text{on }S_{\Delta_1}\times S^1.
\]
Its gluing map is isotopic to the identity over $\Delta_1$ via
$(s,z)\mapsto(\varphi_r(s),e^{ir\operatorname{Im}L(s)}z)$,
giving the required product. The angular circle 
 $|z|=1$ with counterclockwise direction
represents $\delta$. 
 The radial interval of $A$ becomes a second circle after boundary gluing, recording one application of \(\tau\),
so projection to it induces $\xi$.
\end{proof}

\begin{prop}\label{prop: monodromy and specialisation at 1}
There is an integral basis $\kb_1=(f_1,f_2,f_3,f_4)$ of $\Lambda$,
compatible with the filtration 
\eqref{ses: filtration1} and \eqref{ses: filtration2}:
\[
 \qquad \delta=f_3,\qquad 
\Lambda^\times=\langle f_1,f_2,f_3\rangle,\qquad \xi=f_4^*.
\]
The monodromy of the counterclockwise loop
$\gamma_1(r)=1-\frac12\exp(2\pi ir)$, $0\leq r\leq1$ is
\[
 [T_1]_{\kb_1}=\mat{1&1&0&0\\-1&0&0&0\\0&0&1&0\\0&0&0&1}.
\]
The local invariant lattices are
\begin{align*}
 V^{T_1}&=\langle f_3^*,f_4^*\rangle, &&
 (\Wedge^2V)^{T_1}&=\langle f_{12}^*,f_{34}^*\rangle, &&
 (\Wedge^3V)^{T_1}&=\langle f_{123}^*,f_{124}^*\rangle.
\end{align*}
In degrees zero and four the whole lattices 
$\Wedge^0 V$ and $\Wedge^4 V$
are invariant.
For every $k$, specialisation induces an isomorphism
$\Sp_1^k\colon H^k(Y_1,\IZ)\isom(\Wedge^kV)^{T_1}$.
\end{prop}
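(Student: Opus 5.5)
The plan is to read everything off the product decomposition of Lemma~\ref{lem: product at 1}. Since $Y_{\Delta_1}\cong S_{\Delta_1}\times S^1\times S^1$ over $\Delta_1$, the smooth fibre $Y_t$ for $t\in\Delta_1^*$ is homeomorphic to $S_t\times S^1\times S^1$. Its homology therefore splits as $\Lambda=\Lambda'\oplus\IZ\delta\oplus\IZ\cdot(\text{second circle})$, and monodromy acts only on the first factor.

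\textbf{Choice of basis and monodromy.} The fibre $S_1$ has type $\DI$, so the local monodromy of $S\to\IP^1$ on $\Lambda'=H_1(S_{1/2},\IZ)$ has order six. In a suitable symplectic basis $(f_1',f_2')$ it is given by $\mat{1&1\\-1&0}$; this is Kodaira's normal form for a cusp, and the orientation convention fixing the sign is set by the counterclockwise loop $\gamma_1$. I then lift $f_1',f_2'$ along the product splitting to classes $f_1,f_2\in\Lambda^\times$, put $f_3=\delta$ (the first circle), and let $f_4$ be the second circle. By Lemma~\ref{lem: product at 1}, projection to the second circle is $\xi$, so $\xi=f_4^*$ and $\Lambda^\times=\ker\xi=\langle f_1,f_2,f_3\rangle$. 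Because the homeomorphism is fibred over $\Delta_1$ and the circle factors are constant, $T_1$ is the identity on $f_3,f_4$. This gives the block matrix in the statement. I need to check that the parallel-transport identification near $1$ is compatible with \eqref{ses: filtration2}; this holds because the product splitting is compatible with $M^\times\to S$.

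\textbf{Invariants.} The inverse transpose of the $2\times2$ block is $\mat{0&1\\-1&1}$, which has no nonzero fixed vector since $1-\operatorname{tr}+\det=1\neq0$. Hence $V^{T_1}=\langle f_3^*,f_4^*\rangle$. On $\Wedge^2V\cong\Wedge^2V'\oplus(V'\otimes\langle f_3^*,f_4^*\rangle)\oplus\Wedge^2\langle f_3^*,f_4^*\rangle$, the block determinant is $1$, so $f_{12}^*$ is invariant. The middle summand has no invariants, and $f_{34}^*$ is invariant. The case $\Wedge^3$ is handled the same way, giving $f_{123}^*$ and $f_{124}^*$. In degrees $0$ and $4$ the action is by the determinant, which is $1$.

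\textbf{Specialisation.} Here $Y_1\cong S_1\times T^2$ with $S_1\cong S^2$, so Künneth gives $H^*(Y_1,\IZ)=H^*(S^2)\otimes H^*(T^2)$, which is free of ranks $1,2,2,2,1$. Restriction is compatible with the product, so it suffices to identify $H^*(S_1)\to H^*(S_t)$. This map is the identity in degree $0$ and vanishes in degree $1$. In degree $2$ it sends the fundamental class to the fundamental class, since the fibre class restricts to the generator of $H^2(S_t)$ because $S_1$ is a reduced irreducible fibre of multiplicity one. Tensoring with $H^*(T^2)=\Wedge^*\langle f_3^*,f_4^*\rangle$ then shows that the image in each degree is exactly the invariant lattice computed above: $f_{12}^*\otimes\Wedge^*\langle f_3^*,f_4^*\rangle$ together with $1\otimes\Wedge^*\langle f_3^*,f_4^*\rangle$. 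Comparing ranks shows the map is an isomorphism.

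The main obstacle I expect is the degree-two statement that restriction from $H^2(S_1)$ hits a generator of $H^2(S_t)$, rather than some multiple of it. I would settle this by noting that the retraction $S_{\Delta_1}\to S_1$ has degree one on fibres, which again uses that the fibre is reduced.
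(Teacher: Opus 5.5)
Your proposal is correct and follows essentially the same route as the paper. The product homeomorphism of Lemma~\ref{lem: product at 1} gives the adapted basis and the block-diagonal monodromy, and the K\"unneth formula reduces specialisation to the map $H^*(S_1,\IZ)\to H^*(S_t,\IZ)$.

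The one small difference is in degree two. There the paper checks that a generator goes to a generator by using the class of the section $O$, which meets both $S_1$ and $S_t$ exactly once. This is an equivalent, slightly more concrete way to verify your degree-one (multiplicity-one) claim. It also avoids your phrase ``fibre class'', which is ambiguous because the class Poincar\'e dual to a fibre restricts to zero on $S_t$.
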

\begin{proof}
By Lemma~\ref{lem: product at 1}, the two circle factors give
monodromy-invariant classes $f_3=\delta$ and $f_4$ with
$\xi(f_4)=1$. Together with a basis $f_1,f_2$ 
of $\Lambda'=H_1(S_{\frac{1}{2}},\IZ)$
for the type $\DI$
monodromy on the elliptic factor as in \cite[V.10]{BHPV2004},
these give the asserted basis
 of $\Lambda$ and  matrix  $[T_1]_{\kb_1}$.
 The homeomorphism  $Y_{\Delta_1}\cong S_{\Delta_1}\times (S^1)^2$ of Lemma~\ref{lem: product at 1} identifies $Y_t$ with $S_t\times (S^1)^2$ for each $t\in \Delta_1$.
Since $S_1$ is homeomorphic to $S^2$, the specialisation map for
$S_{\Delta_1}\to\Delta_1$ is an isomorphism in degrees zero and two. In degree
two, 
the isomorphism
follows because the cohomology class of the section $O$ has
degree one 
and hence generates the second cohomology groups
on both 
 central and nearby
fibres
 of $S_{\Delta_1}\to\Delta_1$.

The displayed invariant lattices
and the specialisation assertion now follow from the K\"unneth formula for the product $S_t\times (S^1)^2$. More precisely, put \(C=(S^1)^2\), with
\[
H^1(C,\mathbb Z)=\langle f_3^*,f_4^*\rangle.
\]
Then for $k\geq 0$,
\[
H^k(Y_1,\mathbb Z)
\cong
\bigoplus_{p+q=k}H^p(S_1,\mathbb Z)\otimes H^q(C,\mathbb Z). 
\]
Since \(S_1\) is homeomorphic to \(S^2\), only \(p=0,2\) contribute in the above direct sum.
On the nearby elliptic curve $S_t$, monodromy has no invariants in \(H^1(S_t,\mathbb Z)\), whereas it acts identically on \(H^0\) and \(H^2\). It also acts identically on \(H^*(C,\mathbb Z)\). Consequently,
\[
H^k(Y_t,\mathbb Z)^{T_1}
\cong
\bigoplus_{\substack{p+q=k\\p=0,2}}
H^p(S_t,\mathbb Z)\otimes H^q(C,\mathbb Z).
\]These are exactly the summands appearing for the central fibre, and specialization is an integral isomorphism on each.
\end{proof}

\subsection{Monodromy and specialisation map at $\infty$}

\begin{prop}\label{prop: monodromy and specialisation at infinity}
There is an integral basis $\kb_\infty=(h_1,h_2,h_3,h_4)$ of
$\Lambda$, compatible with the filtration \eqref{ses: filtration1} and \eqref{ses: filtration2}, such that
\[
 \Lambda^\times=\langle h_1,h_2,h_3\rangle,\qquad
 \delta=h_3,\qquad \xi=h_4^*.
\]
Here $\delta$ is the positive fibre circle, and
\[
 [T_\infty]_{\kb_\infty}
 =\mat{1&1&0&0\\0&1&0&0\\0&0&1&1\\0&0&0&1}.
\]
In particular, for $N_\infty=T_\infty-I$,
\[
 N_\infty^2=0,\qquad
 \operatorname{im}N_\infty=\ker N_\infty
 =\IZ h_1\oplus\IZ h_3.
\]
For every $k$, specialisation induces an isomorphism
\[
 \Sp_\infty^k\colon H^k(Y_\infty,\IZ)
 \isom(\Wedge^kV)^{T_\infty}.
\]
\end{prop}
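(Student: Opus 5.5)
We will work with the explicit degeneration over a disc $\Delta_\infty$ coming from the proof of Proposition~\ref{prop: compactification X}. Over the punctured disc the fibres are $X_q\isom(\IC^*)^2/\langle(c_1q,c_2),(c_3,c_4q)\rangle$. Near $\infty$ the logarithmic transform at $0$ plays no role, so $Y_{\Delta_\infty}=X_{\Delta_\infty}$.

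\textbf{The basis and the monodromy.} On the universal cover $\IC^2\to(\IC^*)^2$, with coordinates $(\log z_1,\log z_2)/2\pi i$, the lattice $\Lambda$ is spanned by four classes:
\begin{itemize}
\item the two circle classes $h_1$ and $h_3$ of the torus $(\IC^*)^2$;
\item two classes $h_2$ and $h_4$ given by lifting the two generators $(c_1q,c_2)$ and $(c_3,c_4q)$.
\end{itemize}
We order the coordinates so that the second $\IC^*$-factor is the fibre direction of $M^\times$ and the first comes from the $\I_1$ fibre of $S$. Then $h_3=\delta$ and $\Lambda^\times=\langle h_1,h_2,h_3\rangle$. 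The generator $(c_3,c_4q)$ is the image of $\tau$, so $\xi=h_4^*$.

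After a unimodular change of basis we may assume the $c_i$ lie on the correct sides. As $q$ runs once around $0$, the logarithm of $c_1q$ changes by one circle $h_1$ and the logarithm of $c_4q$ changes by $h_3$. This gives $T_\infty h_2=h_2+h_1$ and $T_\infty h_4=h_4+h_3$, with $h_1,h_3$ fixed, which is the displayed matrix. The statements $N_\infty^2=0$ and $\operatorname{im}N_\infty=\ker N_\infty=\IZ h_1\oplus\IZ h_3$ are then immediate.

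A consistency check: projecting to $S$, the monodromy on $\Lambda'$ is the standard $\I_1$ matrix. Compatibility with the filtrations \eqref{ses: filtration1} and \eqref{ses: filtration2} holds because both $h_3=\delta$ and $\xi$ are preserved.

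\textbf{The specialisation map.} First compute $(\Wedge^kV)^{T_\infty}$. Dually, $T_\infty$ acts on $V$ by $h_1^*\mapsto h_1^*-h_2^*$ and $h_3^*\mapsto h_3^*-h_4^*$, while $h_2^*$ and $h_4^*$ are fixed. So $V$ is a direct sum of two rank-two unipotent blocks $U_1\oplus U_2$. Using $\Wedge^kV=\bigoplus_{p+q=k}\Wedge^pU_1\otimes\Wedge^qU_2$, the invariants have ranks $1,2,3,2,1$ in degrees $0,\dots,4$.

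Next compute $H^*(Y_\infty,\IZ)$. By Proposition~\ref{prop: compactification X}, $Y_\infty$ is a del Pezzo surface $D$ of degree six with opposite sides of its hexagon of $(-1)$-curves glued. Use the Mayer--Vietoris sequence, or the spectral sequence of the normal-crossing stratification:
\begin{itemize}
\item the normalisation $D$ has $H^2=\IZ^4$;
\item the hexagon has six edges and six vertices;
\item after the gluing there are $3$ double curves and $2$ triple points.
\end{itemize}
The gluing is by translations of the $\IZ^2$-periodic triangulation, so the dual complex is the triangulated torus. Its cohomology contributes weight-zero pieces of ranks $1,2,1$ in degrees $0,1,2$. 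The double curves $\IP^1$ contribute to $H^2$ and $H^3$ through the kernel and cokernel of the restriction maps.

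An Euler characteristic check: $e(Y_\infty)=6-3\cdot2+2=2$. This is not $0$, so a correction is needed, and it must be redone with the triple points handled correctly. The honest count is $e(D)-e(\text{identified part})$. The expected answer is $H^*(Y_\infty)$ of ranks $1,2,3,2,1$ and Euler characteristic $1-2+3-2+1=1$. We will verify this carefully rather than trust the naive count.

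\textbf{Integrality, and the main obstacle.} Rank agreement is not enough; the map must be an integral isomorphism. The cleanest route is the Clemens--Schmid sequence for this semistable degeneration, or Engel's \cite[Proposition~2.8]{Engel2026} analysis, which covers exactly this Mumford degeneration with $2P$ in place of $6P$. These give that $H^k(Y_\infty)\to H^k(Y_t)^{T_\infty}$ is surjective over $\IQ$ and injective.

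For integrality in degrees $1$ and $3$ we argue directly. In degree one, $H^1(Y_\infty,\IZ)\isom H^1(\text{dual torus},\IZ)$, and it maps onto $\langle h_2^*,h_4^*\rangle$. Degree three then follows from Poincaré-type duality on $Y_{\Delta_\infty}$. Degree two is the hardest step. There we will exhibit integral classes:
\begin{itemize}
\item the pullback $h_{24}^*$ of the fundamental class of the dual torus;
\item line bundles on $Y_{\Delta_\infty}$, namely the fibre-direction class and the class of the section $\sigma$, restricting to primitive classes;
\end{itemize}
and then check that the resulting $3\times3$ matrix against the invariant basis has determinant $\pm1$. This unimodularity check is the main obstacle, and we would first try to deduce it from the unimodularity of the cup-product pairing between invariant classes in degrees $2$ and $2$.
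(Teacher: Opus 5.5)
\textbf{Verdict.} Your monodromy computation is correct and matches the paper. The specialisation half has a genuine gap: it starts from a wrong rank count, and the integrality argument built on it cannot work.

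\textbf{Monodromy.} This is the paper's argument: coordinate circles $h_1,h_3$, logarithms of the two periods $h_2,h_4$, and the increment of $\log q$ giving $h_2\mapsto h_2+h_1$ and $h_4\mapsto h_4+h_3$.

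\textbf{The miscount.} In $\Wedge^2V=\Wedge^2U_1\oplus(U_1\otimes U_2)\oplus\Wedge^2U_2$, the middle summand is a tensor product of two unipotent $2\times2$ Jordan blocks. It has \emph{two} independent invariants, $h_{24}^*$ and $h_{14}^*-h_{23}^*$. Indeed $T_\infty(h_{14}^*-h_{23}^*)=(h_1^*-h_2^*)\wedge h_4^*-h_2^*\wedge(h_3^*-h_4^*)=h_{14}^*-h_{23}^*$. So $(\Wedge^2V)^{T_\infty}=\langle h_{12}^*,h_{34}^*,h_{24}^*,h_{14}^*-h_{23}^*\rangle$ has rank four. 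The invariant ranks are $1,2,4,2,1$, with alternating sum $2$.

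Your Euler characteristic $e(Y_\infty)=2$ was therefore right, and the ``correction'' is wrong. Nothing forces $e(Y_\infty)=0$. It is the open stratum $D\setminus(\text{hexagon})\isom(\IC^*)^2$ whose Euler characteristic must match that of the nearby torus. Consistently, the Mayer--Vietoris sequence for $Y_\infty=D\cup_{\tilde C}C$ gives $H^*(Y_\infty,\IZ)$ free of ranks $1,2,4,2,1$. Here $\tilde C$ is the hexagon and $C$ is the three double curves through the two triple points.

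\textbf{Why the integrality plan fails.}
\begin{itemize}
\item Three classes and a $3\times3$ determinant cannot prove surjectivity onto a rank-four lattice.
\item The fallback via unimodularity fails, because the cup-product pairing on $(\Wedge^2V)^{T_\infty}$ is degenerate: $h_{24}^*$ pairs to zero with all four generators.
\item $\sigma$ is a curve in a threefold, so its class lives in degree four and is not a line bundle.
\item There is no Poincar\'e duality on the singular fibre $Y_\infty$ to pass from degree one to degree three. On $Y_{\Delta_\infty}$ you only have Lefschetz duality relative to the boundary, which does not directly control the image in $(\Wedge^3V)^{T_\infty}$.
\item Degree four is not treated at all.
\end{itemize}

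\textbf{What the paper does instead.} For degrees one to three it invokes \cite[Lemma~7.4]{Engel2026}, which gives the integral isomorphism for exactly this Mumford filling. Degree zero follows from connectedness. In degree four the central fibre is reduced and irreducible, so the fundamental class of the nearby fibre specialises to its fundamental class with coefficient one.
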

\begin{proof}
Use the multiplicative uniformisation
 \(Y_q=X_q\cong
(\mathbb C^*)^2/\langle p_1(q),p_2(q)\rangle\)
of
Proposition~\ref{prop: compactification X}, with periods
$p_1(q)=(c_1(q)q,c_2(q))$ and $p_2(q)=(c_3(q),c_4(q)q)$, 
 where the functions $c_j$ are holomorphic and nowhere zero on the disc.
The positive coordinate circles are
$h_1,h_3$, and logarithms of the two periods give $h_2,h_4$.
More precisely, use the universal covering
\[
E:\mathbb C^2\to(\mathbb C^*)^2,\qquad
E(u,v)=(e^{2\pi iu},e^{2\pi iv}).
\]
Its kernel is generated by
\[
h_1=(1,0),\qquad h_3=(0,1).
\]
Choose logarithms at a reference value \(q_*\). The two extra lattice generators are
\[
h_2=\frac1{2\pi i}
\bigl(\log(c_1(q_*)q_*),\log c_2(q_*)\bigr),
\]\[
h_4=\frac1{2\pi i}
\bigl(\log c_3(q_*),\log(c_4(q_*)q_*)\bigr).
\]Thus
\[
Y_{q_*}\cong
\mathbb C^2/
\bigl(\mathbb Zh_1+\mathbb Zh_2+\mathbb Zh_3+\mathbb Zh_4\bigr).
\]
The first three already describe \(M^\times_{q_*}\); the fourth records one application of \(\tau\). Hence
\[
\Lambda^\times=\langle h_1,h_2,h_3\rangle,\qquad
\delta=h_3,\qquad \xi(h_4)=1.
\]
Continuation around the loop 
\[
q(r)=q_*e^{2\pi ir}, \quad0\leq r\leq 1
\]
changes \(\log q\) by \(2\pi i\). Each \(c_j\), however, has a single-valued logarithm on the whole disc, so its logarithm acquires no increment. Therefore
\[
h_2\longmapsto h_2+(1,0)=h_2+h_1,
\qquad
h_4\longmapsto h_4+(0,1)=h_4+h_3.
\]The coordinate circles $h_1$ and $h_3$ remain unchanged. Consequently,
\[
T_\infty h_1=h_1,\quad
T_\infty h_2=h_1+h_2,\quad
T_\infty h_3=h_3,\quad
T_\infty h_4=h_3+h_4,
\]
which gives exactly the displayed matrix in the stated order. Finally, these formulas immediately give
\[
N_\infty h_2=h_1,\qquad N_\infty h_4=h_3,\qquad
N_\infty h_1=N_\infty h_3=0,
\]so \(N_\infty^2=0\) and
\[
\operatorname{im}N_\infty=\ker N_\infty
=\mathbb Zh_1\oplus\mathbb Zh_3.
\]

The filling is the same Mumford construction as in Engel, so the
specialisation assertion in degrees one to three is exactly
\cite[Lemma~7.4]{Engel2026}. Degree zero follows from connectedness.
In degree four, the central fibre is reduced and irreducible, and
the nearby fundamental class specialises to its fundamental class
with coefficient one. Hence specialisation is an isomorphism in
 degree $4$ as well.
\end{proof}

\subsection{Global monodromy over $U = \IP^1\setminus\{0,1,\infty\}$}
We now compare the markings 
 at $0$ and $1$; the monodromy at infinity then follows from the relation $T_\infty T_1T_0=I$, coming from our chosen presentation of the fundamental group of $U$.

\begin{prop}\label{prop: global monodromy}
The local markings may be chosen so that
\[
 e_1=2f_1+f_3,\qquad e_2=f_2,\qquad
 e_3=f_1+f_3,\qquad e_4=f_2+f_4.
\]
In particular,
$\delta=2e_3-e_1=f_3$.
We use $\kb_0=(e_1,e_2,e_3,e_4)$, the marking at $0$, as the
common reference basis. In this basis the monodromies are
\begin{align*}
 [T_0]_{\kb_0}&=\mat{-1&-1&-1&0\\2&1&1&0\\0&0&1&0\\0&0&0&1},&
 [T_1]_{\kb_0}&=\mat{1&1&0&1\\-2&0&-1&-1\\0&-1&1&-1\\0&0&0&1},\\
 [T_\infty]_{\kb_0}&=\mat{1&0&0&-1\\-2&0&-1&1\\2&1&2&1\\0&0&0&1}.
\end{align*}
In particular, the parameter of the logarithmic transform is
$a=\frac14e_4=\frac14(f_2+f_4)$.
\end{prop}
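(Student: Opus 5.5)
The plan is to build the comparison matrix between $\kb_0$ and $\kb_1$ one graded piece of the filtration \eqref{ses: filtration1}, \eqref{ses: filtration2} at a time. We then compute $[T_1]_{\kb_0}$ by conjugation and read off $[T_\infty]_{\kb_0}=(T_1T_0)^{-1}$. Both markings are obtained by parallel transport to the base point $\frac12$, along the real segments $[0,\frac12]$ and $[\frac12,1]$ respectively. Monodromy preserves the filtration, so the change of basis matrix is block upper triangular. The diagonal entries are already forced:
\begin{itemize}
\item On the sub-lattice $\IZ\delta$ we need $2e_3-e_1=\delta=f_3$, using Proposition~\ref{prop: monodromy at 0} and Proposition~\ref{prop: monodromy and specialisation at 1}.
\item On the top quotient we need $\xi=e_4^*=f_4^*$, so $e_4\equiv f_4 \pmod{\Lambda^\times}$.
\end{itemize}

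\textbf{Step 1: the middle quotient $\Lambda'=H_1(S_{\frac12},\IZ)$.} This is pure elliptic surface data. Modulo $\delta$, the classes $e_2,e_3$ form a basis of $\Lambda'$, with $e_1\equiv 2e_3$. The matrix $[T_0]_{\kb_0}$ then induces an order-four matrix on $\Lambda'$, namely the $\III^*$ monodromy. The matrix $[T_1]_{\kb_1}$ induces the order-six $\DI$ monodromy on $\langle \bar f_1,\bar f_2\rangle$.

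We look for $g\in \mathrm{GL}_2(\IZ)$ with $\bar e_2=\bar f_2$ and $\bar e_3=\bar f_1$ such that the product of the induced elliptic monodromies, $(T_1T_0)^{-1}$, is unipotent with trace $2$ and primitive nilpotent part. These are the properties required of $\I_1$ monodromy at $\infty$. Since the global monodromy of the rational elliptic surface is determined up to overall conjugation, the freedom in choosing $g$ is exactly the centralizer of $T_1$, which we may absorb into the choice of $f_1,f_2$ permitted by Proposition~\ref{prop: monodromy and specialisation at 1}. A short search in $\mathrm{SL}_2(\IZ)$ should show that the choice above works.

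\textbf{Step 2: lifting to $\Lambda^\times$ and then to $\Lambda$.} Lifts of $\bar f_1,\bar f_2$ to $\Lambda^\times$ are only defined modulo $\delta$, and $f_4$ is only defined modulo $\Lambda^\times$. These are genuine marking freedoms, but they must remain compatible with the shape of $[T_1]_{\kb_1}$. That matrix has no off-diagonal entries in the $f_3$ and $f_4$ rows and columns, which reflects the product decomposition of Lemma~\ref{lem: product at 1}. So we may take $e_3=f_1+f_3$ and $e_2=f_2$, and $e_1=2e_3-\delta=2f_1+f_3$ is then forced.

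The remaining unknown is the $\Lambda^\times$-component $v$ in $e_4=f_4+v$. It is not a free choice. Both $e_4$ and $f_4$ are $\tau$-periods, and the section $\sigma^\circ$ of Proposition~\ref{prop: construction X} gives a consistent origin for measuring them along the whole real segment. I expect this step to be the main obstacle: computing $v$ directly would mean tracking how the product trivialisation at $1$, in particular the isotopy $\varphi_r$ built from the logarithm of $-2P$, compares with the good reduction model at $0$.

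\textbf{Step 3: pinning down $v$ by a consistency check.} Rather than computing $v$ directly, we determine it indirectly. Write $v=x f_1+y f_2+z f_3$ and compute $T_\infty=(T_1T_0)^{-1}$ as a function of $x,y,z$. Then impose the conditions of Proposition~\ref{prop: monodromy and specialisation at infinity}: $T_\infty$ must be unipotent with $N_\infty^2=0$ and $\operatorname{im}N_\infty=\ker N_\infty$ of rank two, containing $\delta$ and saturated. Changes of $f_4$ by $T_1$-invariant elements (multiples of $f_3$) are harmless, and we expect these conditions to leave only $v=f_2$ up to that freedom.

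Finally, substituting $e_4=f_2+f_4$ and conjugating gives the displayed $[T_1]_{\kb_0}$ and $[T_\infty]_{\kb_0}$. The identity $a=\frac14(f_2+f_4)$ is then immediate from Definition~\ref{def: choice of a}.
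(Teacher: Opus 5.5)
Your overall plan matches the paper's: compare the markings on $\Lambda'$ first, then fix the lifts using the $\I_1$ conditions at $\infty$. There are, however, two genuine gaps, in Steps~2 and~3.

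\textbf{Step~2 uses a freedom that does not exist.} Shifting the lifts of $\bar f_1,\bar f_2$ by multiples of $\delta=f_3$ is not compatible with $[T_1]_{\kb_1}$.
\begin{itemize}
\item On $\langle f_1,f_2\rangle$ the operator $T_1-I$ acts by the unimodular matrix $\begin{pmatrix}0&1\\-1&-1\end{pmatrix}$. Hence $\langle f_1,f_2\rangle=(T_1-I)\Lambda$ is canonical.
\item Explicitly, $T_1(f_1+cf_3)=(f_1+cf_3)-(f_2+df_3)+df_3$ and $T_1(f_2+df_3)=(f_1+cf_3)+(d-c)f_3$. So the matrix is preserved only for $c=d=0$.
\item The marking at $0$ is equally rigid: $e_1,e_2$ span $(T_0-I)\Lambda$, and $2e_3=e_1+\delta$.
\end{itemize}
So after matching quotients you only know $e_3=f_1+bf_3$ and $e_2=f_2+rf_3$, with integers $b,r$ fixed by the geometry. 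The values $b=1$, $r=0$ you wrote down are the right ones, but they are an outcome, not a choice. Your Step~3 must treat $b,r$ as unknowns together with $v$. In the paper, $N_\infty^2f_1=-rf_3$ forces $r=0$. Also $N_\infty^2f_4\equiv p(f_1-f_2)$ modulo $f_3$, which kills the $f_1$-coefficient $p$ of $v$.

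\textbf{Step~3 imposes conditions at $\infty$ that are too weak,} even granting $b=1$, $r=0$. Take $e_4=f_4+qf_2$. Then $N_\infty f_1=N_\infty f_2=w:=f_1-f_2+f_3$, $N_\infty f_3=0$ and $N_\infty f_4=-2qw+qf_3$. For $q=-1$ just as for $q=1$, one has $N_\infty^2=0$ and $\operatorname{im}N_\infty=\ker N_\infty=\langle f_1-f_2,f_3\rangle$, which is saturated of rank two and contains $\delta$. So your listed conditions leave $v=\pm f_2$.

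The missing input is the \emph{oriented} statement of Proposition~\ref{prop: monodromy and specialisation at infinity}: $\xi(h_4)=1$ and $N_\infty h_4=+\delta$. Hence $N_\infty f_4\equiv\delta$ modulo $N_\infty\Lambda^\times$. With $b$ still unknown this gives $(2b-1)q=1$, which still allows $(b,q)=(0,-1)$. That solution is removed by simultaneously re-marking with $T_0^2$ at $0$ and $T_1^3$ at $1$. Both act as $-I$ on $\Lambda'$ and fix the local matrices, $\delta$, $\xi$ and $e_4$, and together they send $(2b-1,q)$ to $(1-2b,-q)$.

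\textbf{A smaller point in Step~1.} The comparison on $\Lambda'$ is determined only up to both centralisers: $\langle T_1'\rangle$ at $1$ and $\langle T_0'\rangle$ at $0$, not just that of $T_1$. That no other comparison is possible is exactly the paper's trace computation. The condition $\operatorname{tr}(T_1'T_0')=2$ leaves only three candidates for the matrix of $T_0'$ in the $f$-basis, all conjugate by powers of $T_1'$. Exhibiting one consistent $g$ is not enough.
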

\begin{proof}
As in \cite[Theorem~5.1]{Engel2026}, first compare the quotient lattice
$\Lambda'=\Lambda^\times/\IZ\delta$, writing $[v]$ for the class of
$v\in\Lambda^\times$. Denote by $T_p'$ the operator
induced by $T_p$ on $\Lambda'$, for $p=0,1,\infty$.
Since $\delta=2e_3-e_1$ and thus $[e_1]=2[e_3]$, the pair
$([e_3],[e_2])$ is an integral oriented basis.  The local computations give
\[
 A:=[T_0']_{([e_3],[e_2])}=\mat{-1&-2\\1&1},\qquad
 B:=[T_1']_{([f_1],[f_2])}=\mat{1&1\\-1&0}.
\]
Since $T_0'$ has trace zero and determinant one, write
\[
 [T_0']_{([f_1],[f_2])}=\mat{r&s\\t&-r},\qquad r^2+st=-1.
\]
By Proposition~\ref{prop: monodromy and specialisation at infinity},
$T_\infty'$ has trace two and determinant one. Thus
\[
 \operatorname{tr}(T_1'T_0')
 =\operatorname{tr}((T_\infty')^{-1})=2,
\]
which, multiplying the two matrices in the $([f_1],[f_2])$ basis,
gives $r+t-s=2$. Eliminating $t$ gives
$(2s+2-r)^2=-r(3r+4)$, so $r=-1$ or $0$.
The three possibilities for $[T_0']_{([f_1],[f_2])}$ are
\[
 A,\qquad
 BAB^{-1}=\mat{-1&-1\\2&1},\qquad
 B^2AB^{-2}=\mat{0&-1\\1&0}.
\]
If the matrix is $B^jAB^{-j}$, replace the basis by
$((T_1')^j[f_1],(T_1')^j[f_2])$. The matrix of $T_0'$ then becomes
$A$, while that of $T_1'$ remains $B$.

Both bases now give the same matrix $A$ for $T_0'$, but they need
not yet agree: they may differ by a base change matrix $C\in\mathrm{SL}_2(\IZ)$ such that $\inverse C A C = A$. Solving this for $C$ gives 
\[
 C=\mat{x&-2y\\y&x+2y},\qquad
 1=\det C=(x+y)^2+y^2,
\]
and hence $C\in\{I,A,-I,-A\}
{ =\{I,A,A^2,A^3\}}$. 
{Thus we may write $C=A^k$ for some $0\leq k\leq 3$.}
Replacing the marking at $0$ by
$((T_0')^k[e_3],(T_0')^k[e_2])$ therefore makes the two bases agree:
$[e_3]=[f_1]$ and $[e_2]=[f_2]$.
These changes of quotient markings are induced on the full lattice
$\Lambda$ by $T_1^j$ and $T_0^k$, respectively.
They preserve the local monodromy matrices,
$\delta$ and $\xi$; the change at $0$ also fixes $e_4$, hence the
chosen logarithmic-transform parameter from Definition \ref{def: choice of a}.

We now compare the lifts to the full lattice $\Lambda$.
Every identification inducing the chosen quotient marking and
respecting $\delta=2e_3-e_1=f_3$ and $\xi$ now has the form
\[
 e_1=2f_1+(2b-1)f_3,\quad e_2=f_2+r f_3,\quad
 e_3=f_1+b f_3,\quad e_4=f_4+p f_1+q f_2+h f_3
\]
for integers $b,r,p,q,h$. Put $k=2b-1$. Computing
$N_\infty=
 T_\infty-I = 
(T_1T_0)^{-1}-I$ in this marking
 $\{f_1,f_2,f_3,f_4\}$
gives
\[
 N_\infty^2 f_1=-r f_3,\qquad
 N_\infty^2 f_4\equiv p(f_1-f_2)\pmod{\IZ f_3}.
\]
Thus $r=p=0$ by Proposition~\ref{prop: monodromy and specialisation at infinity}.
With these values,
\[
 N_\infty f_1=N_\infty f_2=v:=f_1-f_2+k f_3,\qquad
 N_\infty f_3=0,\qquad N_\infty f_4=-2qv+kq f_3.
\]
We have thus 
\[
N_\infty \Lambda^\times = N_\infty \langle f_1,f_2,f_3\rangle =  \IZ v.
\]
Since $\xi(f_4)=\xi(h_4)=1$, one has $f_4-h_4\in \ker \xi=\Lambda^\times$ and hence 
\begin{equation}\label{eq: N_infty f4}
 N_\infty f_4\equiv N_\infty h_4=\delta \quad \mathrm{modulo}\,  N_\infty \Lambda^\times,   
\end{equation}
where the latter equality is by Proposition~\ref{prop: monodromy and specialisation at infinity}. On the other hand, since $f_3=\delta$,
\begin{equation}\label{eq: N_infty f4'}
N_\infty f_4=-2qv+kq f_3 \equiv kq \delta\quad  \mathrm{modulo}\,  N_\infty \Lambda^\times
\end{equation}
Combining \ref{eq: N_infty f4} and  \ref{eq: N_infty f4'} gives $kq=1$.

Simultaneously 
changing the markings $\{e_1,e_2,e_3,e_4\}$ and $\{f_1,f_2,f_3,f_4\}$ to
\[\begin{aligned}
e_1'&=-e_1,& e_2'&=-e_2,&
e_3'&=\delta-e_3,& e_4'&=e_4,\\
f_1'&=-f_1,& f_2'&=-f_2,&
f_3'&=f_3,& f_4'&=f_4,
\end{aligned}\]
replaces
$(k,q)$ by $(-k,-q)$ without changing the local matrices  of $T_0$ at $0$ and $T_1$ at $1$, $\delta$,
or $e_4$. 
Thus
we may take $k=q=1$, hence $b=1$.
Finally, replacing $f_4$ by $f_4+h f_3$ removes $h$.
This gives the stated relations between the bases. The matrix of
$T_0$ in $\kb_0$ is already given in
Proposition~\ref{prop: monodromy at 0}. Applying $T_1$ to these
relations and expressing the results in the $e_i$ gives the second
matrix. Finally, $T_\infty=(T_1T_0)^{-1}$ gives the last one.
\end{proof}

\begin{cor}\label{cor: local invariants common basis}
With the notation of \eqref{eq: named forms}, the local invariant lattices at $0$ and
$1$ and the full monodromy invariant lattices are
\[
\begin{array}{@{}cccc@{}}
\toprule
 k&(\Wedge^kV)^{T_0}&(\Wedge^kV)^{T_1}&
 (\Wedge^kV)^{\langle T_0,T_1\rangle}\\
\midrule
 0&\IZ&\IZ&\IZ\\
 1&\langle e_3^*,\xi\rangle&\langle\alpha,\xi\rangle&\IZ\xi\\
 2&\langle\omega,e_{34}^*\rangle&\langle\omega,\alpha\wedge\xi\rangle&\IZ\omega\\
 3&\langle e_{123}^*,\omega\wedge\xi\rangle&
 \langle\omega\wedge\alpha,\omega\wedge\xi\rangle&\IZ(\omega\wedge\xi)\\
 4&\IZ(\omega\wedge\alpha\wedge\xi)&\IZ(\omega\wedge\alpha\wedge\xi)&\IZ(\omega\wedge\alpha\wedge\xi)\\
\bottomrule
\end{array}
\]
Here $\omega\wedge\alpha\wedge\xi=e_{1234}^*$ is the positive
generator in top degree.
\end{cor}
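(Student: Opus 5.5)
The plan is to read off all entries of the table from data already established, translating into the common basis $\kb_0$ by linear algebra. The $T_0$ column is Proposition~\ref{prop: specialisation at 0} verbatim, using $\omega=\beta-e_{34}^*$. For the $T_1$ column I will dualise the relations of Proposition~\ref{prop: global monodromy}. Inverting $e_1=2f_1+f_3$, $e_2=f_2$, $e_3=f_1+f_3$, $e_4=f_2+f_4$ gives $f_1=e_1-e_3$, $f_2=e_2$, $f_3=2e_3-e_1$ and $f_4=e_4-e_2$. From this, $f_3^*=e_1^*+e_3^*=\alpha$ and $f_4^*=e_4^*=\xi$. It also gives $f_1^*=e_1^*+2e_3^*$ and $f_2^*=e_2^*+e_4^*$. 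Proposition~\ref{prop: monodromy and specialisation at 1} lists the invariants in the $f^*$ basis, so it remains to express them in terms of $e^*$. The degree-one case is immediate. Next, $f_{34}^*=\alpha\wedge\xi$, and I will expand $f_{12}^*=(e_1^*+2e_3^*)\wedge(e_2^*+e_4^*)$ and compare it with $\omega=2e_{12}^*-e_{23}^*-e_{34}^*$. Since the invariant lattice is saturated, I expect $\omega\equiv \pm f_{12}^*$ modulo $\alpha\wedge\xi$, and this should come out of a short computation. Degree three is handled the same way, via $f_{123}^*=f_{12}^*\wedge\alpha$ and $f_{124}^*=f_{12}^*\wedge\xi$, which gives $\langle\omega\wedge\alpha,\omega\wedge\xi\rangle$ once the $\alpha\wedge\xi$ correction is killed by the extra wedge factor. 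In degree four I only need to check that $\omega\wedge\alpha\wedge\xi=e_{1234}^*$. Since $\alpha\wedge\xi=e_{14}^*+e_{34}^*$, only the term $2e_{12}^*$ paired with $e_3^*$ survives, along with $-e_{23}^*\wedge e_1^*$. The sign and size must be checked explicitly. I expect coefficient $2-1=1$ after reordering, which makes this the positive generator.

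For the global invariants, $\pi_1(U)$ is generated by $\gamma_0,\gamma_1$, so the full invariant lattice is the intersection of the two local ones. I will compute each intersection inside $\Wedge^kV$ using the explicit bases. In degree one, $a e_3^*+b\xi=c\alpha+d\xi$ forces $c=0$ because of the $e_1^*$ coefficient, and so $a=0$, leaving $\IZ\xi$. In degree two, I write $x\omega+ye_{34}^*=x'\omega+y'\alpha\wedge\xi$. The $e_{14}^*$ coefficient forces $y'=0$, so $y=0$ and the intersection is $\IZ\omega$. In degree three, $e_{123}^*$ and $\omega\wedge\xi$ against $\omega\wedge\alpha,\omega\wedge\xi$: I will show that $\omega\wedge\alpha$ is not congruent to any multiple of $e_{123}^*$ modulo $\omega\wedge\xi$ by comparing a single coefficient, say that of $e_{124}^*$ or $e_{234}^*$, which gives $\IZ(\omega\wedge\xi)$. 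Degrees zero and four are trivially fully invariant, since $\det T_p=1$.

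The main obstacle is bookkeeping: getting the dual change of basis and the signs right, especially the identification of $f_{12}^*$ with $\omega$ modulo $\alpha\wedge\xi$. This is where I would recompute carefully. The first check I would make is that $\omega$ is $T_1$-invariant directly from the matrix $[T_1]_{\kb_0}$ of Proposition~\ref{prop: global monodromy}, using the inverse-transpose rule. That validates the table entry independently of the basis translation.
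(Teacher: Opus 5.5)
Your route is the paper's: dualise the relations of Proposition~\ref{prop: global monodromy}, rewrite the $T_1$-invariants of Proposition~\ref{prop: monodromy and specialisation at 1} in the $e^*$-basis, and intersect the two local columns. However, your key dual-basis formula is wrong, and with it the central step fails. Since $f_1^*=\sum_j f_1^*(e_j)\,e_j^*$ and the relations give $f_1^*(e_1)=2$, $f_1^*(e_3)=1$, $f_1^*(e_2)=f_1^*(e_4)=0$, one gets $f_1^*=2e_1^*+e_3^*$, not $e_1^*+2e_3^*$. Indeed $(e_1^*+2e_3^*)(f_1)=(e_1^*+2e_3^*)(e_1-e_3)=-1$.

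With your formula you would get $f_{12}^*=e_{12}^*+e_{14}^*-2e_{23}^*+2e_{34}^*$. Its $e_{12}^*$-coefficient is $1$, whereas every element $\pm\omega+c\,\alpha\wedge\xi$ has $e_{12}^*$-coefficient $\pm2$, since $\alpha\wedge\xi=e_{14}^*+e_{34}^*$. So the expected congruence would not come out. Saturation of the invariant lattice would not force a coefficient $\pm1$ in any case; only the explicit formula does. With the correct formula, $f_{12}^*=2e_{12}^*+2e_{14}^*-e_{23}^*+e_{34}^*=\omega+2\alpha\wedge\xi$ exactly. This is the identity the paper uses, and it gives $f_{123}^*=\omega\wedge\alpha$ and $f_{124}^*=\omega\wedge\xi$ on the nose.

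Second, in degree three the coefficients you propose to compare do not separate anything. Both $\omega\wedge\alpha=e_{123}^*-e_{134}^*$ and $e_{123}^*$ have vanishing $e_{124}^*$- and $e_{234}^*$-coefficients, while $\omega\wedge\xi=2e_{124}^*-e_{234}^*$. Comparing those entries only yields $b=d$ below. The decisive coefficient is that of $e_{134}^*$. In $a\,e_{123}^*+b\,\omega\wedge\xi=c\,\omega\wedge\alpha+d\,\omega\wedge\xi$ it forces $c=0$, and then $a=0$ and $b=d$, giving $\mathbb{Z}(\omega\wedge\xi)$.

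Your degree-one, degree-two and top-degree computations, including $\omega\wedge\alpha\wedge\xi=e_{1234}^*$, are correct. With these two corrections the proof goes through exactly as in the paper.
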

\begin{proof}
The column at $0$ is Proposition~\ref{prop: specialisation at 0}.
Dualising the basis relations in Proposition~\ref{prop: global monodromy}
and taking wedge products gives
\begin{align*}
 f_3^*&=\alpha,& f_4^*&=\xi,\\
 f_{12}^*&=\omega+2\alpha\wedge\xi,& f_{34}^*&=\alpha\wedge\xi,\\
 f_{123}^*&=\omega\wedge\alpha,& f_{124}^*&=\omega\wedge\xi.
\end{align*}
Subtracting twice the second degree-two generator from the first
gives the stated integral basis at $1$.

Since the meridians at $0$ and $1$ generate the fundamental group
of $\IP^1\setminus\{0,1,\infty\}$, the full invariant lattices
are the intersections of the two local invariant columns. These
are read off from the table, using $\alpha=e_1^*+e_3^*$,
$\alpha\wedge\xi=e_{14}^*+e_{34}^*$ and
$\omega\wedge\alpha=e_{123}^*-e_{134}^*$.
\end{proof}

\section{Computing the topological invariants of $Y$}

The topological invariants of $Y$ are computed with the standard theorems of algebraic topology, that is, the Seifert--van Kampen theorem and the Leray spectral sequence for the locally constant sheaf $\IZ$ on $Y$. 
The actual computations require some care, building upon the local results from the previous section.

\subsection{Computation of the fundamental group}
We apply the Seifert--van Kampen theorem as in
\cite[Section~6]{Engel2026}. Put $U=\IP^1\setminus\{0,1,\infty\}$.
Since $U$ retracts onto a graph and $Y_U\to U$ is a torus bundle
with a section, we have
\[
 \pi_1(Y_U)=\Lambda\rtimes\pi_1(U).
\]
Let $x_0,x_1,x_\infty$ be the meridians lifted to this section,
with $x_\infty x_1x_0=1$. The inclusion $Y_U\into Y$ is surjective
on fundamental groups, since loops can be moved off the three
omitted fibres.

\begin{prop}\label{prop: simply connected}
The complex manifold $Y$ is simply connected.
\end{prop}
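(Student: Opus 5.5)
The plan is to compute $\pi_1(Y)$ as a quotient of $\pi_1(Y_U)=\Lambda\rtimes\pi_1(U)$. Since $Y_U\into Y$ is surjective on $\pi_1$, it suffices to find enough relations contributed by gluing in the tubular neighbourhoods $Y_{\Delta_p}$, $p\in\{0,1,\infty\}$, to kill every generator. By Seifert--van Kampen, applied to $Y=Y_U\cup\bigcup_p Y_{\Delta_p}$ (each gluing region is $Y_{\Delta_p^*}$, a torus bundle over a punctured disc), the relations at $p$ are exactly the kernel of $\pi_1(Y_{\Delta_p^*})\to\pi_1(Y_{\Delta_p})\cong\pi_1(Y_p)$. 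I will not need the full kernel, only enough elements of it.

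\emph{At $\infty$ and at $1$.} The section $\sigma$ of $\eta$ from Proposition~\ref{prop: compactification X} survives in $Y$ away from $0$. The meridian $x_\infty$ lifted to $\sigma$ bounds the disc $\sigma(\Delta_\infty)$, so $x_\infty=1$. Moreover, in the multiplicative uniformisation of Proposition~\ref{prop: monodromy and specialisation at infinity}, the coordinate circles $h_1,h_3$ die in the Mumford fibre $Y_\infty$. This is because $H_1(Y_\infty)\cong\Lambda/\langle h_1,h_3\rangle$ by the specialisation isomorphism in degree one together with the fact that the central fibre is a del Pezzo surface with glued sides; alternatively one reads off directly that $h_1,h_3$ bound in the toric filling. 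In particular $\delta=h_3=1$.

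At $1$, Lemma~\ref{lem: product at 1} gives $Y_{\Delta_1}\cong S_{\Delta_1}\times(S^1)^2$ with $S_{\Delta_1}$ simply connected. Hence $x_1=1$ (again via the section), and $f_1=f_2=1$, since they come from the elliptic factor.

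\emph{At $0$.} The relation $x_\infty x_1x_0=1$ then forces $x_0=1$ in $\pi_1(Y)$. The fibre $Y_0$ is multiple, however, and the lifted meridian does not bound a disc; instead I use the description of $\pi_1(Y_{\Delta_0})=\pi_1(G)$ from the proof of Proposition~\ref{prop: specialisation at 0}. There $\pi_1(G)$ is generated by $\Lambda$ and $g$, with $g^4=t_{2e_3-e_1+e_4}=t_{\delta+e_4}$. The key step is to identify $x_0$ (lifted via $\sigma$ on $\Delta_0^*$, transported through the identification $\Psi_a$ of Proposition~\ref{prop: log transform at 0}) with $g$ times an element of $\Lambda$. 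Then $x_0^4$ equals $\delta+e_4$ up to an element of $(T_0-I)$-images and norm terms. Since $x_0=1$ and $\delta=1$, this gives $e_4\in$ the subgroup generated by $\operatorname{im}(T_0-I)$, $\delta$ and $f_1,f_2$, which is already trivial.

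\emph{Conclusion.} With $x_0=x_1=x_\infty=1$, $\pi_1(Y)$ is a quotient of $\Lambda$ by $f_1,f_2,\delta=f_3$, and by $e_4=f_2+f_4$ (Proposition~\ref{prop: global monodromy}). Killing $e_4$ kills $f_4$, so all of $\Lambda=\langle f_1,\dots,f_4\rangle$ dies and $Y$ is simply connected.

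The main obstacle is the bookkeeping at $0$. I must make precise how the section-lifted meridian sits relative to $g$, given the twist by $\bar a$ and the translation $\sigma_a$. I must also make sure the exponent relation gives exactly $e_4$ modulo what is already killed, and not, say, $2e_4$; the latter would leave a $\IZ/2$, as the authors' first attempt did. I will first compute $x_0^4$ in $\Lambda\rtimes\pi_1(U)$ on $\tilde X_{\Delta_0}$ using $\Psi_a\circ\rho=\rho'\circ\Psi_a$. The relevant translation parts are the winding of $\sigma_a(s)=\frac{\log s}{2\pi i}4a(s)$, contributing $4a=e_4$, plus the lattice translation $(0,1)$ from the $\hat\rho$-lift, contributing $\delta$.
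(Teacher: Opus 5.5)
Your overall scheme matches the paper's, and your way of killing $\ker\xi$ is a valid variant. The elliptic-factor classes die in $Y_{\Delta_1}\cong S_{\Delta_1}\times(S^1)^2$, and together with $\delta=h_3$ they span $\ker\xi$. The paper instead takes the normal closure of $(T_\infty-I)\Lambda$ and uses $T_1(e_1-e_2-e_3)=-e_2$. For $h_3$ you need your second justification, that the circle bounds in the toric filling; the specialisation isomorphism only controls $H_1$, not $\pi_1$.

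The gap is at $0$, which is where the content of the proposition lies. If $x_0=g\,t_\lambda$ in $\pi_1(G)$, then
\[
 x_0^4=t_{\delta+e_4+N\lambda},\qquad N=\sum_{i=0}^{3}T_0^i,\qquad N\lambda=2\lambda_3\,\delta+4\,\xi(\lambda)\,e_4,
\]
since $N$ kills $e_1,e_2$, sends $e_3$ to $2\delta$ and $e_4$ to $4e_4$. The subgroup you invoke is generated by $\operatorname{im}(T_0-I)=\langle e_1,e_2\rangle$, $\delta$, $f_1$ and $f_2$. It lies inside $\ker\xi$, so it cannot absorb $4\xi(\lambda)e_4$. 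With $x_0=1$ and $\ker\xi$ dead, you only obtain $e_4^{1+4\xi(\lambda)}=1$, and $e_4$ survives unless $\xi(\lambda)=0$.

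So the argument hinges on $\xi(x_0^4)=1$, and this is not formal: it depends on which lift of the meridian the section produces. Appealing to ``the lattice translation $(0,1)$ from the $\hat\rho$-lift'' does not settle it. That translation is a property of the particular lift $g$, not of $x_0$, so using it amounts to assuming $\xi(\lambda)=0$.

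The missing input is the observation the paper uses. Four turns of the meridian lift to the closed loop $s\mapsto\Psi_a(\tilde\sigma(s))$, $|s|=\epsilon$, in $\tilde X_{\tilde\Delta_0}$. Its class is the sum of two pieces:
\begin{itemize}
\item the class of the untwisted section loop;
\item the winding of $\sigma_a(s)=\frac{\log s}{2\pi i}\,4a$, which is exactly $4a=e_4$.
\end{itemize}
The untwisted section comes from the nowhere-vanishing section $e$ of $M|_O$. Hence its loop lifts to $\tilde M^\times_{\tilde\Delta_0}$, and its class lies in the image of $H_1(\tilde M^\times_0,\mathbb{Z})$, which is $\ker\xi$. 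Therefore $x_0^4\equiv e_4$ modulo $\ker\xi$, and $x_0=1$ together with $\ker\xi=1$ gives $e_4=1$. The precise $\ker\xi$-component ($\delta$, $0$, or another multiple of $\delta$) plays no role, so you can drop the $\hat\rho$ bookkeeping entirely.
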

\begin{proof}
The two multiplicative circles of the Mumford construction vanish
in $Y_{\Delta_\infty}$, exactly as in \cite[Section~6]{Engel2026}.
In the reference basis at $0$, Proposition~\ref{prop: global monodromy}
gives
\[
 K_\infty=(T_\infty-I)\Lambda
 =\IZ(e_1-e_2-e_3)\oplus\IZ(2e_3-e_1).
\]
Their normal closure also contains $T_1(e_1-e_2-e_3)=-e_2$,
and hence contains $\ker\xi=\langle e_1,e_2,e_3\rangle$.
Thus the image of the fibre group is generated by the image $c$
of $e_4$. It is central, since $\xi$ is monodromy invariant.

The section extends over the unchanged fibres at $1$ and $\infty$
by Propositions~\ref{prop: construction X} and
\ref{prop: compactification X}. Its disks give
$x_1=x_\infty=1$, so the base relation also gives $x_0=1$.

It remains to use the logarithmic filling at $0$.
The gluing is translation by
\[
 \sigma_a(s)=\frac{\log s}{2\pi i}\,4a
\]
on the fourth-root disk. Four positive turns of the base meridian
give one turn in $s$, so the lifted translation changes by
$4a=e_4$. The original section lifts to $M^\times$, so its
untwisted contribution has $\xi$-value zero. After killing
$\ker\xi$, the resulting filling relation is therefore
\[
 x_0^4=c^{\xi(4a)}=c,
\]
since $a=\frac14e_4$ and $\xi(e_4)=1$.
As $x_0=1$, this kills $c$ as well. All generators of
$\pi_1(Y_U)$ are thus trivial in $\pi_1(Y)$.
\end{proof}
\subsection{Computation of the integral cohomology}
We will compute the integral cohomology of $Y$ using the 
Leray spectral sequence.
First we will compute the groups on the $E_2$ page and then analyse the differentials, following the same path as \cite{Engel2026}.

\subsubsection{The $E_2$-page of the spectral sequence}
Let $j\colon U\into\IP^1$, and let $\mathcal L_k$ be the local
system on $U$ with fibre $L_k=\Wedge^kV$ induced by the monodromy representation.
\begin{lem}\label{lem: global invariant lattices}
The composition
\[
 H^0(U,\mathcal L_k)\longrightarrow(\Wedge^kV)^{T_0}
 \longrightarrow Q_0^k
\]
is surjective.
\end{lem}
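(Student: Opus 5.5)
Since $U$ is connected, $H^0(U,\mathcal L_k)$ is the lattice of global monodromy invariants $(\Wedge^kV)^{\langle T_0,T_1\rangle}$. This lattice was computed in the last column of Corollary~\ref{cor: local invariants common basis}. The plan is therefore a degree-by-degree check: in each degree $k$, we show that the global invariant generator maps onto the generator of $Q_0^k$ named in Proposition~\ref{prop: specialisation at 0}. Since each $Q_0^k$ is cyclic, it suffices to show that the image of the global generator in $Q_0^k$ is a generator.

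For $k=0$ there is nothing to prove, because $Q_0^0=0$. For $k=1$ the global invariant lattice is $\IZ\xi$, and Proposition~\ref{prop: specialisation at 0} states that $Q_0^1\isom\IZ/4$ is generated by the class of $\xi$, so the map is onto. For $k=2$ the global lattice is $\IZ\omega$, and $Q_0^2\isom\IZ/2$ is generated by $[\omega]$; recall that $[e_{34}^*]=-[\omega]$ because $\beta=\omega+e_{34}^*$ lies in the image. For $k=3$ the global lattice is $\IZ(\omega\wedge\xi)$, and $Q_0^3\isom\IZ/2$ is generated by $[\omega\wedge\xi]$.

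For $k=4$ the global invariant generator is $\omega\wedge\alpha\wedge\xi=e_{1234}^*$, and $Q_0^4\isom\IZ/4$ is generated by $[e_{1234}^*]$, so again the map is onto. This gives surjectivity in every degree.

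The only point needing care is that the named generators of $Q_0^k$ in Proposition~\ref{prop: specialisation at 0} are literally the same classes as the global invariant generators in Corollary~\ref{cor: local invariants common basis}. This holds because both are written in the common reference basis $\kb_0$, with $\omega$ and $\xi$ as in \eqref{eq: named forms}. For $k=4$ we must also confirm the identity $\omega\wedge\alpha\wedge\xi=e_{1234}^*$. Expanding $\omega=2e_{12}^*-e_{23}^*-e_{34}^*$ and $\alpha=e_1^*+e_3^*$, the only term surviving after wedging with $e_4^*$ is $2e_{12}^*\wedge e_3^*\wedge e_4^*-e_{23}^*\wedge e_1^*\wedge e_4^*=2e_{1234}^*-e_{1234}^*=e_{1234}^*$. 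This agrees with the statement of the corollary.
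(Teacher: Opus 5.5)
Your proposal is correct and follows the paper's own proof: you identify $H^0(U,\mathcal L_k)$ with the global invariants from Corollary~\ref{cor: local invariants common basis}, and note that their generators $\xi,\omega,\omega\wedge\xi,\omega\wedge\alpha\wedge\xi=e_{1234}^*$ are exactly the classes named in Proposition~\ref{prop: specialisation at 0} as generators of the cyclic groups $Q_0^k$. Your degree-by-degree check, including the verification of $\omega\wedge\alpha\wedge\xi=e_{1234}^*$, only spells out what the paper's one-line argument leaves to the reader.
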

\begin{proof}
The global sections $H^0(U,\mathcal L_k)=(\Wedge^kV)^{\pi_1(U)}$
are computed in Corollary~\ref{cor: local invariants common basis} and by Proposition \ref{prop: specialisation at 0} they map onto  the respective $Q_0^k$. 
\end{proof}

\begin{prop}\label{prop: cohomology of direct images}
In the notation defined in \eqref{eq: named forms}, the $E_2$-page of the Leray spectral sequence
\[
 E_2^{p,q}=H^p(\IP^1,R^q f_*\IZ)\Longrightarrow H^{p+q}(Y,\IZ)
\]
is as follows, with the only possibly non-zero differentials $d_2$ shown:
\[
\begin{tikzcd}[column sep= small,row sep=small]
 q=4 & 4\IZ(\omega\wedge\alpha\wedge\xi)
 \arrow[drr] & 0
 & \IZ[\omega\wedge\alpha\wedge\xi]\\
 q=3 & 2\IZ(\omega\wedge\xi)
 \arrow[drr] & 0
 & \IZ[\omega\wedge\alpha]\\
 q=2 & 2\IZ\omega
 \arrow[drr] & 0
 & \IZ[\alpha\wedge\xi]\\
 q=1 & 4\IZ\xi
 \arrow[drr] & 0 & \IZ[\alpha]\\
 q=0 & \IZ & 0 & \IZ[1]\\
 & p=0 & p=1 & p=2
\end{tikzcd}
\]
The generators in the $p=2$ column are given as classes of coinvariants with respect to $\pi_1(U)$ via the isomorphism 
 $H^2(\IP^1,R^kf_*\IZ)\isom (L_k)_{\pi_1(U)}$ explained in the proof.  In particular, $[1]$ corresponds to the positive generator of $H^2(\IP^1,\IZ)$.
\end{prop}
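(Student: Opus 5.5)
I compute the sheaves $R^qf_*\IZ$ column by column. The key input is that $R^qf_*\IZ$ agrees with $j_*\mathcal L_q$ away from $0$. At $1$ and $\infty$ the specialisation maps are isomorphisms onto the local invariants (Propositions~\ref{prop: monodromy and specialisation at 1} and \ref{prop: monodromy and specialisation at infinity}), so the stalks of $R^qf_*\IZ$ there coincide with those of $j_*\mathcal L_q$. At $0$, Proposition~\ref{prop: specialisation at 0} gives an injective specialisation map with cokernel $Q_0^q$. This yields a short exact sequence of sheaves
\[
0\longrightarrow R^qf_*\IZ\longrightarrow j_*\mathcal L_q\longrightarrow (Q_0^q)_{0}\longrightarrow 0,
\]
where $(Q_0^q)_0$ is the skyscraper sheaf at $0$.

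The $p=0$ column follows by taking global sections. We have $H^0(j_*\mathcal L_q)=(\Wedge^qV)^{\pi_1(U)}$, which is read off from Corollary~\ref{cor: local invariants common basis}. Then $H^0(R^qf_*\IZ)$ is the kernel of the map to $Q_0^q$. By Lemma~\ref{lem: global invariant lattices} this map is surjective, and by Proposition~\ref{prop: specialisation at 0} the generator $\xi,\omega,\omega\wedge\xi,e_{1234}^*$ maps to a generator of $\IZ/4,\IZ/2,\IZ/2,\IZ/4$. This gives $4\IZ\xi$, $2\IZ\omega$, $2\IZ(\omega\wedge\xi)$ and $4\IZ(\omega\wedge\alpha\wedge\xi)$. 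Surjectivity also matters in the long exact sequence: it implies $H^1(R^qf_*\IZ)\to H^1(j_*\mathcal L_q)$ is injective and $H^2(R^qf_*\IZ)\isom H^2(j_*\mathcal L_q)$, because the skyscraper has no higher cohomology. So it suffices to compute $H^1$ and $H^2$ of $j_*\mathcal L_q$.

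For $j_*\mathcal L$ on $\IP^1$ with three punctures I use the standard description. Cover $\IP^1$ by $U$ together with small discs $\Delta_p$, and apply Mayer--Vietoris, or equivalently the Leray sequence for $j$ with $R^1j_*\mathcal L$ supported at the punctures with stalks $(L)_{T_p}$ (coinvariants). This gives
\[
H^1(\IP^1,j_*\mathcal L)=\ker\Bigl(H^1(U,\mathcal L)\to\bigoplus_p L_{T_p}\Bigr),\qquad
H^2(\IP^1,j_*\mathcal L)=\operatorname{coker}\Bigl(H^1(U,\mathcal L)\to\bigoplus_p L_{T_p}\Bigr).
\]
Here $H^1(U,\mathcal L)$ is the group cohomology of the free group $\langle\gamma_0,\gamma_1\rangle$, so it equals $L\oplus L$ modulo $\{((T_0-1)v,(T_1-1)v)\}$. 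Composing with the restriction maps, the cokernel identifies with $L_{\pi_1(U)}$, as in the parabolic cohomology of \cite{Engel2026}. Thus $H^2(\IP^1,R^qf_*\IZ)\isom(L_q)_{\pi_1(U)}$.

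I then compute these coinvariants explicitly from the matrices in Proposition~\ref{prop: global monodromy}. The submodule $(T_0-1)L+(T_1-1)L$ should have corank one in each degree, with generators the classes of $1,\alpha,\alpha\wedge\xi,\omega\wedge\alpha,\omega\wedge\alpha\wedge\xi$. Poincaré-type duality between invariants and coinvariants is a good check: each generator pairs to $\pm1$ with the invariant class in the complementary degree. The pairings are $\alpha\wedge(\omega\wedge\xi)=\pm e_{1234}^*$, $(\alpha\wedge\xi)\wedge\omega=\pm e_{1234}^*$, and so on, and the absence of torsion has to be checked directly. The $p=1$ column is where I expect the main obstacle. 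I need $\ker(H^1(U,\mathcal L_q)\to\bigoplus_pL_{T_p})=0$, i.e.\ the parabolic cohomology vanishes. I would verify this with an Euler characteristic count. We have $\chi(U,\mathcal L)=-\operatorname{rk}L$, and the ranks satisfy
\[
\operatorname{rk}H^1_{\mathrm{par}}=\operatorname{rk}L-\operatorname{rk}L^{\pi_1}-\operatorname{rk}L_{\pi_1}-\textstyle\sum_p\bigl(\operatorname{rk}L-\operatorname{rk}L^{T_p}\bigr)\cdot(-1)+\dots,
\]
which in practice means checking $\sum_p\operatorname{rk}L^{T_p}=\operatorname{rk}L+\operatorname{rk}L^{\pi_1}+\operatorname{rk}L_{\pi_1}$ from the table. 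For $q=1$ this reads $2+2+2=4+1+1$. For $q=2$ I need $\operatorname{rk}(\Wedge^2V)^{T_\infty}$, which from $N_\infty^2=0$ with rank-two image is $4$, giving $2+2+4=6+1+1$. Since $H^1$ of $j_*\mathcal L$ is torsion free (it is a subgroup of the torsion-free $H^1(U,\mathcal L)$ modulo\dots), I would instead argue that it injects into a free module and has rank zero, hence vanishes. Finally, the displayed $d_2$ are the only possible differentials by degree reasons on a page with three columns.
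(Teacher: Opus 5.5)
Your treatment of the $p=0$ and $p=2$ columns follows the paper: the same short exact sequence with the skyscraper $(Q_0^q)_0$, the same surjectivity onto $Q_0^q$, and the same identification $H^2(\IP^1,j_*\mathcal L_q)\cong (L_q)_{\pi_1(U)}$. You do leave the torsion-freeness of the coinvariants as something that ``has to be checked directly''; the paper carries out that check by listing the relations explicitly.

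\textbf{The gap: the $p=1$ column.} Your Euler characteristic count is correct, but it only shows that $H^1(\IP^1,j_*\mathcal L_q)$ has rank zero, i.e.\ is finite. The reason you give for torsion-freeness is not valid. You argue that the group is a subgroup of a torsion-free $H^1(U,\mathcal L_q)$, but
\[
H^1(U,\mathcal L)=(L\oplus L)/\{((T_0-I)v,(T_1-I)v)\}
\]
has torsion exactly when this image fails to be saturated. For example, rank-one $L$ with $T_0=T_1=-1$ gives $H^1(U,\mathcal L)\cong\IZ\oplus\IZ/2$. Nothing in your proposal excludes this for $q=1,2,3$. The omission matters for the paper's main conclusion: on $\IP^1$ no differential enters or leaves the column $p=1$. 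Hence any nonzero finite $E_2^{1,q}$ survives to $E_\infty$ and makes $H^{q+1}(Y,\IZ)$ nonzero, which is precisely what must be excluded.

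\textbf{How the paper closes it.} It computes integrally, using the Dettweiler--Wewers description
\[
H^1(\IP^1,j_*\mathcal L_q)\cong \frac{(T_1-I)L_q\cap(T_\infty-I)L_q}{(T_1-I)(L_q^{T_0})}.
\]
It shows the numerator is infinite cyclic in each degree. It then exhibits $T_0$-invariant classes $e_3^*,\beta,e_{123}^*$ whose images under $T_1-I$ are exactly the generators of the numerator. So the quotient is zero, not merely finite.

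\textbf{An alternative repair.} You could keep your rank argument and instead verify, degree by degree, that the image of $v\mapsto((T_0-I)v,(T_1-I)v)$ is saturated in $L_q\oplus L_q$. This is a Smith normal form computation; it would make $H^1(U,\mathcal L_q)$ torsion-free and your injection argument valid. But it is an actual integral computation, of roughly the same size as the paper's, and does not follow from general principles.
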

\begin{proof}
Consider the natural map $R^k f_*\IZ\to j_*\mathcal L_k = j_*j^*R^k f_*\IZ$. Over $U$ this is an isomorphism. For $p\in \{ 0 , 1, \infty\}$ we take sections over a small disc $\Delta_p$ around $p$ getting $H^k(Y_{\Delta_p}, \IZ)\isom H^k(Y_p, \IZ) \to (\Wedge^kV)^{T_p}$, which is exactly given by the specialisation map \eqref{eq: specialisation map}, see  \cite[(5.4)]{Looijenga1992}.

Therefore, by Propositions \ref{prop: specialisation at 0}, \ref{prop: monodromy and specialisation at 1}, and \ref{prop: monodromy and specialisation at infinity}, this map fits into a short exact sequence 
\begin{equation}\label{eq: direct image sheaves}
 0\longrightarrow R^k f_*\IZ\longrightarrow j_*\mathcal L_k
 \longrightarrow(Q_0^k)_0\longrightarrow0,
\end{equation}
with cokernel a skyscraper sheaf supported at $0$. 

By Lemma \ref{lem: global invariant lattices} taking global sections gives a short exact sequence
\[ 0 \to H^0(\IP^1, R^kf_*\IZ) \to
H^0(\IP^1,j_*\mathcal L_k) \to Q_0^k\to 0. \]
 Combining the orders $|Q_0^k| = 1,4,2,2,4$ from Proposition \ref{prop: specialisation at 0} with the descriptions of generators of the middle group from Corollary \ref{cor: local invariants common basis} gives the groups in the first column of the table. 

The long exact cohomology sequence of
\eqref{eq: direct image sheaves} gives isomorphisms
\begin{equation}\label{eq: ident cohomology}
 H^1(\IP^1,R^kf_*\IZ)\isom H^1(\IP^1,j_*\mathcal L_k),
 \qquad
 H^2(\IP^1,R^kf_*\IZ)\isom H^2(\IP^1,j_*\mathcal L_k),
\end{equation}
because the skyscraper quotient has no cohomology in positive
degree.

We next compute the second cohomology, which is the group of
coinvariants by \cite[(5.3)]{Looijenga1992}, that is, 
\[
 H^2(\IP^1,j_*\mathcal L_k)
 \isom (L_k)_{\pi_1(U)}
 = \frac{\Wedge^kV}
 {\im(T_0-I)+\im(T_1-I)}.
\]
In the indexed dual bases, these actions are computed from the
inverse transposes of the matrices in
Proposition~\ref{prop: global monodromy} and their exterior powers.

Recalling the notation from \eqref{eq: named forms}:
\[
\xi=e_4^*,\qquad \alpha=e_1^*+e_3^*,\qquad \beta=2e_{12}^*-e_{23}^*,\qquad
\omega=\beta-e_{34}^* = 2e_{12}^*-e_{23}^*-e_{34}^* ,
\]
integral column operations give the following  complete set of relations:
\[
\begin{array}{@{}cll@{}}
\toprule
 k&\text{relations}&\text{generator}\\
\midrule
 1&e_2^*=\xi=0,\quad e_3^*=-2e_1^*&[\alpha]= -[e_1^*]\\
 2&e_{13}^*=e_{23}^*=e_{24}^*=0,\quad
 e_{14}^*=e_{12}^*,\quad e_{34}^*=-2e_{12}^*&[\alpha\wedge\xi] = -[e_{12}^*]\\
 3&e_{124}^*=e_{134}^*=e_{234}^*=0&[\omega\wedge\alpha] = [e_{123}^*]\\
\bottomrule
\end{array}
\]
Thus each quotient is infinite cyclic, without torsion.
Degrees zero and four are constant. This proves the $H^2$ column.

For later use we record that in $(L_2)_{\pi_1(U)}$ we have the relation
\begin{equation}\label{eq: invariant coinvariant relation}
 [\omega]=-4[\alpha\wedge\xi].
\end{equation}

For the remaining first cohomology group we use the 
description of Dettweiler and Wewers
\cite[\S1.3]{DettweilerWewers2006} of the image of the inclusion $H^1(\IP^1, j_* \kl_k)\into H^1(U, \kl_k)$ in terms of group cohomology,
changing their right-action convention to our left-action
convention, with $T_i$ acting on $L_k$.

A group cocycle is a map
\[
 c\colon\pi_1(U)\longrightarrow L_k
\]
satisfying $ c(\gamma\delta)=c(\gamma)+\gamma c(\delta)$ and a coboundary associated with $v\in L_k$ is $d_v(\gamma)=(\gamma-I)v$.

A class $[c]\in H^1(U, \kl_k)$ is in the image if and only if it extends over the punctures, that is, if we restrict the action to the cyclic group generated by one of the loops $\gamma_i$, then the class $c(\gamma_i)\in(T_i-I)L_k$ is a coboundary 
\cite[Lemma~1.2]{DettweilerWewers2006}.

So if $[c]$ is in the image, we can globally modify by  a coboundary  so that
$c(\gamma_0)=0$, and put $x=c(\gamma_1)$. The local condition at
$1$ says $x\in(T_1-I)L_k$. The relation
$\gamma_\infty\gamma_1\gamma_0=1$ and the cocycle rule give
\[
 0=c(1)=c(\gamma_\infty)+T_\infty c(\gamma_1)
       +T_\infty T_1c(\gamma_0)
       =c(\gamma_\infty)+T_\infty x.
\]
Thus $ x = -\inverse T_\infty c(\gamma_\infty)$. The local condition at infinity is then equivalent to $x \in \inverse T_\infty (T_\infty -I)L_k = (T_\infty -I)L_k $.
 Finally, the normalization at $0$ is
unique only up to coboundaries $d_v$ with
$v\in L_k^{T_0}$, which change $x$ by $(T_1-I)v$. We obtain
\begin{equation}\label{eq: parabolic cohomology}
 H^1(\IP^1,j_*\mathcal L_k)
 \isom
 \frac{(T_1-I)L_k\cap(T_\infty-I)L_k}
 {(T_1-I)(L_k^{T_0})}.
\end{equation}
Integral column operations give the intersections in the numerator:
\[
\begin{array}{@{}ccc@{}}
\toprule
 k&(T_1-I)L_k\cap(T_\infty-I)L_k
   &\text{invariant preimage}\\
\midrule
 1&\IZ(2e_1^*+e_2^*+e_3^*+\xi)&e_3^*\\
 2&\IZ(2e_{14}^*+e_{24}^*+e_{34}^*)&\beta\\
 3&\IZ(e_{124}^*-e_{134}^*-e_{234}^*)&e_{123}^*\\
\bottomrule
\end{array}
\]
The last column lies in $L_k^{T_0}$, and its image under
$T_1-I$ is the displayed generator. Indeed, the monodromy matrix gives
\begin{align*}
 (T_1-I)e_3^*&=2e_1^*+e_2^*+e_3^*+\xi,\\
 (T_1-I)\beta=(T_1-I)e_{34}^*
 &=2e_{14}^*+e_{24}^*+e_{34}^*,\\
 (T_1-I)e_{123}^*&=e_{124}^*-e_{134}^*-e_{234}^*.
\end{align*}
Hence numerator and denominator
in \eqref{eq: parabolic cohomology} agree and the quotient is zero. 
The same holds in degrees
zero and four, where the local systems are constant.
Thus $ H^1(\IP^1,R^kf_*\IZ)=H^1(\IP^1,j_*\mathcal L_k)=0$ as claimed.
\end{proof}

\subsubsection{Computing $d_2$}
By Proposition~\ref{prop: cohomology of direct images}, the only possible non-zero
differentials in the integral Leray spectral sequence are 
\[
 d_2^{0,q}\colon E_2^{0,q}\longrightarrow E_2^{2,q-1},
 \qquad q=1,\ldots,4
\]
and since all groups involved are infinite cyclic, we only need to compute the image of the generators. We will do this directly for $4\xi$ and then exploit the multiplicative structure of the spectral sequence.

\begin{prop}\label{prop: Leray differentials}
With the transgression sign convention specified below, the
differentials are
\begin{align*}
 d_2(4\xi)&=[1],\\
 d_2(2\omega)&=[\alpha],\\
 d_2(2\omega\wedge\xi)&=-[\alpha\wedge\xi],\\
 d_2(4\omega\wedge\alpha\wedge\xi)&=-[\omega\wedge\alpha].
\end{align*}
\end{prop}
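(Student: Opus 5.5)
The plan is to compute $d_2(4\xi)$ directly from topology and then get the other three differentials from multiplicativity of the Leray spectral sequence. The differentials are derivations with respect to the cup product $E_2^{p,q}\otimes E_2^{p',q'}\to E_2^{p+p',q+q'}$, and the $E_2^{0,*}$ column is a subring. Moreover, $E_2^{2,*}$ is a module over it via the identification with coinvariants. Since $Y$ is simply connected by Proposition~\ref{prop: simply connected}, $H^1(Y,\IZ)=0$. The groups $E_2^{1,*}$ vanish and $E_\infty^{2,0}$ is a quotient of $\IZ[1]$. So $H^1(Y,\IZ)=E_\infty^{0,1}=\ker d_2^{0,1}$, which forces $d_2(4\xi)=\pm[1]$: injectivity on $4\IZ\xi$ gives that the image is nonzero. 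To pin down the exact value, and to fix the sign convention, I would compute directly. Restrict to $Y_U$ and view $\xi$ as the class of the map $Y\supset Y_U\to S^1$ given by the $\tau$-coordinate. Then $d_2(4\xi)$ is the obstruction to extending $4\xi$ from $Y\setminus Y_0$ (where the section $\sigma$ exists) over $Y_0$. Equivalently, it is the transgression computed by the filling relation $x_0^4=c$ from the proof of Proposition~\ref{prop: simply connected}: the class $4\xi$ evaluates to $4$ on $c=e_4$, and $x_0^4$ is a $4$-fold loop around the point $0\in\IP^1$. This identifies $d_2(4\xi)$ with $\xi(4a)$ times the generator $[1]$ of $H^2(\IP^1,\IZ)$. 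I would declare that sign to be the convention.

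For the remaining degrees I use the Leibniz rule $d_2(x\cup y)=d_2(x)\cup y\pm x\cup d_2(y)$, together with products of $E_2^{0,*}$ elements landing in $E_2^{0,*}$. A direct product only gives multiples, though. For instance $d_2(4\xi\cdot 2\omega)=d_2(8\omega\wedge\xi)=[1]\cdot 2\omega\pm 4\xi\cdot d_2(2\omega)$. In the coinvariants $(L_2)_{\pi_1(U)}$, the relation \eqref{eq: invariant coinvariant relation} gives $[2\omega]=-8[\alpha\wedge\xi]$. So that identity relates $4\,d_2(2\omega\wedge\xi)$ to $-8[\alpha\wedge\xi]$ and $4\xi\cdot d_2(2\omega)$, where the action of $E_2^{0,1}$ on $E_2^{2,1}$ is the wedge on coinvariants ($[\alpha]\wedge\xi=[\alpha\wedge\xi]$). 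Since the targets are torsion free, dividing by integers is legitimate. I still need one more independent equation, however.

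To get it, I use Poincaré duality on the compact oriented $6$-manifold $Y$, together with the known orders of the groups. The product of $E_\infty^{0,q}$ with $E_\infty^{2,4-q}$ pairs into $E_\infty^{2,4}=H^6$, and the pairing must be perfect modulo filtration. Concretely, I would compute $d_2(2\omega)$ by pairing. If $d_2(2\omega)=m[\alpha]$, then $E_\infty^{2,1}\cong\IZ/m$ contributes torsion to $H^3(Y)$. Since $Y$ is a closed $6$-manifold, that torsion must match the torsion of $H^4(Y)$, which comes from $E_\infty^{2,2}$ and is governed by $d_2(2\omega\wedge\xi)$. Combined with the Leibniz identity above, this yields two equations. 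Together with $b_2$-type counting from the Euler characteristic and the identity $\chi(Y)=\chi(\IP^1)\chi(\text{torus})+\ldots=0$, they should determine $m=\pm1$. The top-degree relation $d_2(4\omega\wedge\alpha\wedge\xi)=-[\omega\wedge\alpha]$ follows from Leibniz applied to $4\xi\cdot(\omega\wedge\alpha)$-type products, or dually from the fact that $H^6(Y)=\IZ$ and $H^5(Y)=0$ (duality with $H_1=0$). The latter forces $d_2^{0,4}$ to be an isomorphism onto $E_2^{2,3}$, so only the sign remains, fixed by the same Leibniz computation.

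The main obstacle is making the multiplicative structure honest. One must check that the product $E_2^{0,q}\times E_2^{2,q'}\to E_2^{2,q+q'}$ really is the wedge on coinvariants, with the correct signs, and that the Leibniz rule determines the values rather than just their multiples. If the duality/Leibniz argument leaves ambiguity in $d_2(2\omega)$, I would fall back to a direct local computation. That computation compares the extension obstruction of $2\omega$ across $Y_0$ using the class $\beta=\omega+e_{34}^*$, which descends to $Y_0$ by Proposition~\ref{prop: specialisation at 0}, and evaluates its defect $e_{34}^*$ via the same transgression as for $\xi$.
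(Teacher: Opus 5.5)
Your computation of $d_2(4\xi)$ is the same as the paper's: the section disc over $\IP^1\setminus\{0\}$ together with the filling relation $x_0^4\equiv e_4$ modulo $\ker\xi$. So is your derivation-rule relation from $4\xi\cdot 2\omega$, which gives $d_2(2\omega\wedge\xi)=(m-2)[\alpha\wedge\xi]$ when $d_2(2\omega)=m[\alpha]$.

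You differ from the paper in where the second equation comes from. The paper stays inside the spectral sequence. It applies the derivation rule to the vanishing product $(2\omega)\wedge(2\omega\wedge\xi)\in E_2^{0,5}=0$, which gives $(4m-4)[\omega\wedge\alpha\wedge\xi]=0$, hence $m=1$. It then obtains the top differential from $(2\omega)^2=-4(4\omega\wedge\alpha\wedge\xi)$.

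You instead bring in global topology, and this does work:
\begin{itemize}
\item Since $E_\infty^{1,*}=0$ and $E_\infty^{0,k}\subset\IZ$ is free, $\operatorname{Tors}H^k(Y)=\operatorname{Tors}E_\infty^{2,k-2}$.
\item The universal coefficient theorem and Poincar\'e duality give $\operatorname{Tors}H^3(Y)\cong\operatorname{Tors}H_2(Y)\cong\operatorname{Tors}H^4(Y)$.
\item This forces $|m|=|m-2|$, so $m=1$. The cases $m=0$ and $m=2$ are excluded because then one of $E_\infty^{2,1},E_\infty^{2,2}$ is $\IZ$ and the other is $\IZ/2$.
\item $H^5(Y)\cong H_1(Y)=0$ makes $d_2^{0,4}$ surjective.
\end{itemize}
This is a valid consistency check built on Proposition~\ref{prop: simply connected}. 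The paper's route is shorter and needs no case analysis. It also does not use $\pi_1(Y)=1$ or duality, so the cohomology computation stays an independent output.

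Several details of your sketch need correcting:
\begin{itemize}
\item The Euler characteristic carries no information: read off the $E_2$-page it equals $2$ for every $m$.
\item With the total-degree sign in the Leibniz rule fixed, your torsion argument gives $m=1$ exactly, not $\pm1$. With the opposite sign it would give $m=-1$, so the sign convention matters, just as in the paper.
\item There is no product of the form $4\xi\cdot(\omega\wedge\alpha)$ available, because $\omega\wedge\alpha$ is not a global invariant. The only non-trivial product of $E_2^{0,*}$ classes landing in $E_2^{0,4}$ is $(2\omega)^2$. Using it determines $d_2(4\omega\wedge\alpha\wedge\xi)$ outright, which makes your $H^5$ argument superfluous.
\item Injectivity of $d_2^{0,1}$ alone only shows $d_2(4\xi)\neq0$. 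To get $\pm[1]$ from topology you also need $E_\infty^{2,0}\subset H^2(Y,\IZ)$ to be torsion free. This follows from $H_1(Y)=0$ by the universal coefficient theorem.
\end{itemize}
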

\begin{proof}
The low-degree exact sequence of the Leray spectral sequence contains
\[
 H^1(Y,\IZ)\longrightarrow H^0(\IP^1,R^1f_*\IZ)
 \xrightarrow{d_2}H^2(\IP^1,\IZ).
\]
Thus $d_2$ is the obstruction to extending a section of
$R^1f_*\IZ$ to a class in $H^1(Y,\IZ)$.
Put $B=\IP^1\setminus\{0\}$. Since $B$ is contractible and the
fibres are connected, the same sequence gives
\[
 H^1(Y_B,\IZ)\isom H^0(B,R^1f_*\IZ).
\]
Using the restriction of \eqref{eq: direct image sheaves} to $B$, the class $4\xi$
extends uniquely to $u_B\in H^1(Y_B,\IZ)$.  Let
$u_0\in H^1(Y_{\Delta_0},\IZ)$ be its local extension, supplied
by Proposition~\ref{prop: specialisation at 0}.
Their restrictions to $Y_{\Delta_0^*}$ agree on the fibres.
The low-degree sequence over $\Delta_0^*$ therefore gives a unique
$\nu\in H^1(\Delta_0^*,\IZ)$ with
\[
 u_0-u_B=f^*\nu.
\]
Computing Leray and Mayer--Vietoris with the same injective
resolution identifies the gluing obstruction with
$d_2(4\xi)=\partial_{\mathrm{MV}}\nu$ for the cover
$\IP^1=B\cup\Delta_0$; see
\cite[Lemma~20.8.2 and Remark~20.13.2]{StacksProject} for these
constructions. We use local extension minus extension over $B$
as our transgression convention, with the boundary map sending
the positive meridian class to $[1]$.

Let $x_0$ be the section lift of a positive meridian around $0$.
The section extends over $B$ by Propositions~\ref{prop: construction X}
and \ref{prop: compactification X}, so $x_0$ bounds a section disc
in $Y_B$ and $u_B(x_0)=0$.
The logarithmic gluing gives $x_0^4=e_4$ modulo $\ker\xi$,
as in the proof of Proposition~\ref{prop: simply connected}.
Since $u_0$ restricts to $4\xi$ on the fibre,
\[
 4u_0(x_0)=(4\xi)(e_4)=4.
\]
Thus $\nu$ evaluates to one on the positive meridian, and
$d_2(4\xi)=\partial_{\mathrm{MV}}\nu=[1]$.\footnote{One can also see this using differential forms. Closed one-forms representing $u_0$ and $u_B$ need not agree on the overlap: up to an exact form, their difference is the pullback of a closed one-form $\eta$ on an annulus around $0$. The calculation above says that $\eta$ has integral one around the positive meridian. To obtain a two-form on the base, choose a function $\chi$ which is zero near the inner boundary of the annulus and one near the outer boundary. Then $d(\chi\eta)=d\chi\wedge\eta$, extended by zero outside the annulus, represents the transgression with our sign convention. By Stokes, its integral over the base is exactly the meridian integral of $\eta$, hence one. This also determines the integral class, since $H^2(\IP^1,\IZ)$ has no torsion.}

The remaining differentials follow from the product rule for $d_2$,
with sign given by total degree. In our description of the $E_2$-page,
products of invariant classes are wedge products, and
$ s\cup[v]=[s\wedge v]$
for an invariant class $s$ and a coinvariant class $[v]$.
Indeed, cup product by $s$ comes from the coefficient morphism
$v\mapsto s\wedge v$ \cite[Lemma~20.31.1]{StacksProject}, with which
\eqref{eq: ident cohomology} and the coinvariant identification
\cite[(5.3)]{Looijenga1992} are compatible.
All target groups below are infinite cyclic, so we may cancel
nonzero integer factors.

Write $d_2(2\omega)=m[\alpha]$ with $m\in\IZ$.
First, the product with $4\xi$ determines the next differential:
using $2\omega\wedge4\xi=4(2\omega\wedge\xi)$ and
$[\omega]=-4[\alpha\wedge\xi]$ from
\eqref{eq: invariant coinvariant relation}, we get
\[
 4d_2(2\omega\wedge\xi)
 =4m[\alpha\wedge\xi]+2[\omega]
 =4(m-2)[\alpha\wedge\xi].
\]
Thus $d_2(2\omega\wedge\xi)=(m-2)[\alpha\wedge\xi]$.
To determine $m$, apply $d_2$ to the vanishing product
$(2\omega)\wedge(2\omega\wedge\xi)=0$. This gives
\[
 0=2m[\omega\wedge\alpha\wedge\xi]
   +2(m-2)[\omega\wedge\alpha\wedge\xi],
\]
so $m=1$. This proves the two middle formulas.

Finally, $\omega^2=-4\,\omega\wedge\alpha\wedge\xi$ gives
$(2\omega)^2=-4(4\omega\wedge\alpha\wedge\xi)$. Applying $d_2$
and using $d_2(2\omega)=[\alpha]$, we obtain
\[
 -4d_2(4\omega\wedge\alpha\wedge\xi)
 =4[\omega\wedge\alpha],
\]
which proves the last formula.
\end{proof}

\begin{cor}\label{cor: integral homology}
The manifold $Y$ is an integral homology six-sphere. More precisely,
\[
 H^k(Y,\IZ)\isom H_k(Y,\IZ)\isom
 \begin{cases}
  \IZ,&k=0,6,\\
  0,&1\leq k\leq5.
 \end{cases}
\]
\end{cor}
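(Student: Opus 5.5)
The plan is to run the Leray spectral sequence $E_2^{p,q}=H^p(\IP^1,R^qf_*\IZ)\Rightarrow H^{p+q}(Y,\IZ)$ with the $E_2$-page from Proposition~\ref{prop: cohomology of direct images} and the differentials from Proposition~\ref{prop: Leray differentials}. Since the base has real dimension two, the only possibly nonzero differentials are the $d_2^{0,q}\colon E_2^{0,q}\to E_2^{2,q-1}$. Hence $E_3=E_\infty$, and the whole computation reduces to determining kernels and cokernels of these maps.

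By Proposition~\ref{prop: Leray differentials}, each $d_2^{0,q}$ for $q=1,2,3,4$ sends a generator of the infinite cyclic group $E_2^{0,q}$ to $\pm$ a generator of the infinite cyclic group $E_2^{2,q-1}$. These generators are $4\xi\mapsto[1]$, $2\omega\mapsto[\alpha]$, $2\omega\wedge\xi\mapsto-[\alpha\wedge\xi]$ and $4\omega\wedge\alpha\wedge\xi\mapsto-[\omega\wedge\alpha]$. So each $d_2^{0,q}$ is an isomorphism $\IZ\to\IZ$. Consequently $E_\infty^{0,q}=0$ for $1\le q\le4$ and $E_\infty^{2,q}=0$ for $0\le q\le3$. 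The column $p=1$ vanishes already on $E_2$.

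The surviving terms are therefore $E_\infty^{0,0}=\IZ$ and $E_\infty^{2,4}\isom\IZ[\omega\wedge\alpha\wedge\xi]$. On each antidiagonal $p+q=k$ there is at most one nonzero $E_\infty$ term, so the filtration on $H^k(Y,\IZ)$ has at most one nonzero graded piece and involves no extension problem. This gives $H^0=H^6=\IZ$ and $H^k=0$ for $1\le k\le5$.

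For homology, $Y$ is a compact connected oriented manifold: it is a complex threefold, hence a closed orientable $6$-manifold. Poincaré duality $H_k\isom H^{6-k}$ then yields the same groups. As a consistency check, Proposition~\ref{prop: simply connected} already gives $H_1=0$.

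There is no real obstacle: the substantive work is in Proposition~\ref{prop: Leray differentials}. The only points to confirm are that the $E_2$-page has no torsion and no nonzero $H^1$ column, and that the $d_2$ images are generators rather than proper multiples. Both are exactly what those two propositions state.
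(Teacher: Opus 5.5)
Your proposal is correct and matches the paper's own proof. Both show that each $d_2^{0,q}$ is an isomorphism of infinite cyclic groups, so only $E_3^{0,0}$ and $E_3^{2,4}$ survive, with no higher differentials or extension problems. Both then apply Poincaré duality on the closed oriented $6$-manifold $Y$ to obtain the homology groups.
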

\begin{proof}
The four differentials in Proposition~\ref{prop: Leray differentials}
are isomorphisms between the integral generators of
Proposition~\ref{prop: cohomology of direct images}. Hence
\[
 E_3^{p,q}=
 \begin{cases}
  \IZ,&(p,q)=(0,0),(2,4),\\
  0,&\text{otherwise}.
 \end{cases}
\]
There are no higher differentials and no extension problems, so
this gives the asserted cohomology groups. Since $Y$ is a smooth
compact complex threefold, it is a closed oriented real
six-manifold. Integral Poincar\'e duality gives the homology groups.
\end{proof}

\section{Conclusions}
We restate and prove the main result.
\begin{thm}\label{thm: main} Let $Y(u)$ be the family of complex manifolds constructed in the previous sections. 
	Then the differentiable manifold underlying the constructed complex manifolds $Y(u)$ is diffeomorphic to $S^6$. 
	
	No $Y(u)$ is biholomorphic to one of the  complex manifolds constructed in \cite{Engel2026, Alpoge2026}.
\end{thm}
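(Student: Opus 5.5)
The first assertion follows from standard high-dimensional topology. By Proposition~\ref{prop: simply connected}, $Y$ is simply connected, and by Corollary~\ref{cor: integral homology} it is an integral homology $6$-sphere. By Hurewicz, $\pi_k(Y)=0$ for $k\le 5$. A degree-one map $Y\to S^6$ (collapsing the complement of a disc) is then a homology isomorphism between simply connected spaces, so Whitehead's theorem makes it a homotopy equivalence. Thus $Y$ is a homotopy $6$-sphere, and the generalised Poincar\'e conjecture (Smale, dimension $\ge5$) shows that $Y$ is homeomorphic to $S^6$. To upgrade this to a diffeomorphism we need the Kervaire--Milnor computation: the group $\Theta_6$ of oriented homotopy $6$-spheres up to diffeomorphism is trivial. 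Hence $Y$ is diffeomorphic to $S^6$. This part is purely formal given the earlier sections.

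For the second assertion, the plan is to extract a biholomorphic invariant that differs between the two families. Any such $Y$ has algebraic dimension one: the pullbacks of functions on $\IP^1$ give algebraic dimension at least one, and it cannot be larger because the general fibres are non-algebraic complex tori, or at least because $H^2(Y,\IZ)=0$ rules out projective or Moishezon structures. So $f\colon Y\to\IP^1$ is the algebraic reduction, which is unique up to automorphisms of $\IP^1$. Any biholomorphism between two such manifolds therefore commutes with the fibrations over $\IP^1$ up to a M\"obius transformation, and it preserves the fibres, their multiplicities, and their analytic types. In particular it preserves the multiset of multiple-fibre multiplicities and the types of singular fibres.

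The comparison then reads off these invariants. In our $Y$ there is exactly one multiple fibre, of multiplicity four and with reduction a bielliptic surface of type c2) (Proposition~\ref{prop: log transform at 0}). The fibre at $1$ is unchanged from $X$, locally the product of a cuspidal curve with $(S^1)^2$ topologically (Lemma~\ref{lem: product at 1}), and the fibre at $\infty$ is a degenerate del Pezzo-type fibre (Proposition~\ref{prop: compactification X}). In the construction of \cite{Alpoge2026, Engel2026}, the surface has fibres $\III^*$ at one point and $\DI$ at another, and log transforms are performed with multiplicities $4$ and $6$ (or the corresponding local degrees). That construction therefore has a multiple fibre of multiplicity $6$, or more generally a different configuration of multiple and singular fibres. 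Since multiplicities of fibres of the algebraic reduction are biholomorphic invariants, no $Y(u)$ can be biholomorphic to any member of that family.

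The main obstacle is making the algebraic-reduction argument rigorous. One must show that $a(Y)=1$ and that every meromorphic function on $Y$ factors through $f$, so that any biholomorphism induces an automorphism of the base. The natural approach is to note that the general fibre is a torus with no nonconstant meromorphic functions. Failing that, one can argue that the fibration is determined as the map given by the field of meromorphic functions, which has transcendence degree one, and then use the fact that $f$ has connected fibres, so that it coincides with the normalised algebraic reduction. A secondary task is to pin down precisely the multiple fibres in Engel's construction, in order to cite a multiplicity, such as $6$, that does not occur in ours.
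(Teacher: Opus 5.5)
Your topological half is the paper's argument: Hurewicz and Whitehead make $Y$ a homotopy $6$-sphere, and $\Theta_6=0$ (Kervaire--Milnor, read through the $h$-cobordism theorem) gives the diffeomorphism. Your strategy for the second half is also exactly the paper's: the fibration is the algebraic reduction, so a biholomorphism respects it up to an automorphism of $\IP^1$, and one compares singular fibres.

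\textbf{The gap.} The concrete comparison you then make rests on a misreading and fails as stated. The surface with fibres $\III^*$, $\DI$, $\I_1$ is the surface of \emph{this} paper, not that of Engel or Alpoge et al. The configuration ``log transforms of multiplicities $4$ and $6$ at the $\III^*$ and $\DI$ fibres'' is the authors' abandoned first attempt, namely Engel's recipe applied verbatim to this surface. That attempt gives a rational homology sphere with $\pi_1=\IZ/2$, so it is not a complex structure on $S^6$ at all. Engel works on a different rational elliptic surface; the paper performs the analogue ``in a related setting'', with $2P$ in place of Engel's $6P$. So there is no multiplicity-$6$ fibre to point to. Your fallback ``or more generally a different configuration'' is exactly the assertion that still needs checking.

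\textbf{How to close it.} Look up the actual fibres of Engel's threefold and compare them with those of $Y$ using an invariant of fibre-preserving biholomorphisms. Two candidates:
\begin{itemize}
\item The number and multiplicities of multiple fibres. $Y$ has a single one, of multiplicity $4$, because no log transform is performed at the $\DI$ fibre.
\item The conjugacy classes of the local monodromies on $H_1$ of the fibre. These are unchanged by the log transform. For $Y$ they have order $4$ at $0$, order $6$ at $1$, and are unipotent of infinite order at $\infty$.
\end{itemize}

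\textbf{A smaller point.} $H^2(Y,\IZ)=0$ only excludes $a(Y)=3$. Ruling out $a(Y)=2$ really requires the general fibre to have algebraic dimension zero. The paper also takes this for granted.
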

\begin{proof}
The first assertion follows as in \cite[proof of Theorem~4.1]{Engel2026}.
By Proposition~\ref{prop: simply connected} and
Corollary~\ref{cor: integral homology}, $Y(u)$ is simply connected and has
the integral homology of $S^6$. The Hurewicz and Whitehead theorems
therefore imply that it is a homotopy six-sphere. The group
$\Theta_6$ of $h$-cobordism classes of smooth homotopy six-spheres
is trivial by \cite[\S7]{KervaireMilnor1963}. Smale's
$h$-cobordism theorem \cite{Smale1962} then implies that $Y(u)$
is diffeomorphic to the standard $S^6$.
	
	Assume $f\colon Y(u)\to Y'$ is a biholomorphism to one of the complex manifolds constructed in \cite{Engel2026}. Then $f$ induces an isomorphism on the field of meromorphic functions and hence a commutative diagram
	\[
	\begin{tikzcd}
Y(u) \rar{\isom} \dar & Y'\dar\\
\IP^1 \rar[equal] & \IP^1
	\end{tikzcd}
	\]
	because the fibrations coincide with the algebraic reduction. But the two constructions have different types of singular fibres, a contradiction.
\end{proof}

\begin{rem}
	AI \cite{openai2026chatgpt} will immediately give some information on the canonical bundle and Hodge numbers, but we leave this for later exploration.
\end{rem}

\newcommand{\etalchar}[1]{$^{#1}$}

\end{document}